\theoremstyle{plain}
\newtheorem{thm}{Theorem}[section]
\newtheorem{lem}[thm]{Lemma}
\newtheorem{prop}[thm]{Proposition}
\def\@rst #1 #2other{#1}
\newcommand\MR[1]{\relax\ifhmode\unskip\spacefactor3000 \space\fi
  \MRhref{\expandafter\@rst #1 other}{#1}}
\newcommand{\MRhref}[2]{\href{http://www.ams.org/mathscinet-getitem?mr=#1}{MR#2}}
\theoremstyle{definition}
\newtheorem{defn}[thm]{Definition}
\newtheorem{remark}[thm]{Remark}
\numberwithin{equation}{section}
\newcommand{\dsb}{\begin{adjustwidth}{2.5em}{0pt}
\begin{footnotesize}}
\newcommand{\dse}{\end{footnotesize}
\end{adjustwidth}}
\newcommand{\ssb}{\begin{adjustwidth}{2.5em}{0pt}}
\newcommand{\sse}{\end{adjustwidth}}
\newcommand{\aryb}{\begin{eqnarray*}}
\newcommand{\arye}{\end{eqnarray*}}
\def\alb#1\ale{\begin{align*}#1\end{align*}}
\def\allb#1\alle{\begin{align}#1\end{align}}
\newcommand{\eqb}{\begin{equation}}
\newcommand{\eqe}{\end{equation}}
\newcommand{\eqbn}{\begin{equation*}}
\newcommand{\eqen}{\end{equation*}}
\newcommand{\BB}{\mathbbm}
\newcommand{\ol}{\overline}
\newcommand{\op}{\operatorname}
\newcommand{\re}{\operatorname{Re}}
\newcommand{\frk}{\mathfrak}
\newcommand{\eqD}{\overset{d}{=}}
\newcommand{\ep}{\varepsilon}
\newcommand{\wt}{\widetilde}
\newcommand{\wh}{\widehat}
\newcommand{\mcl}{\mathcal}
\newcommand{\bdy}{\partial}
\newcommand{\eps}{\varepsilon}
\newcommand{\cc}{{\mathbf{c}}}
\let\originalleft\left
\let\originalright\right
\renewcommand{\left}{\mathopen{}\mathclose\bgroup\originalleft}
\renewcommand{\right}{\aftergroup\egroup\originalright}
\title{Critical Liouville quantum gravity and CLE$_4$}
 \date{ }
 \author{
\begin{tabular}{c} Morris Ang\\[-5pt]\small UC San Diego \end{tabular}
\begin{tabular}{c} Ewain Gwynne\\[-5pt]\small University of Chicago \end{tabular}  
}
\begin{document}

\maketitle

\begin{abstract}
Consider a critical ($\gamma=2$) Liouville quantum gravity (LQG) disk together with an independent conformal loop ensemble (CLE) with parameter $\kappa=4$. We show that the critical LQG surfaces parametrized by the regions enclosed by the CLE$_4$ loops are conditionally independent critical LQG disks given the LQG lengths of the loops.  
We also show that the joint law of the LQG lengths of the loops is described in terms of the jumps of a certain $3/2$-stable process. 
Our proofs are via a limiting argument based on the analogous statements for $\gamma \in (\sqrt{8/3},2)$ and $\kappa = \gamma^2 \in (8/3,4)$ which were proven by Miller, Sheffield, and Werner (2020). 
Our results are used in the construction of a coupling of supercritical LQG with CLE$_4$ in another paper by the same authors.
\end{abstract}
  
%KEYWORDS: Liouville quantum gravity, Schramm-Loewner evolution, conformal loop ensemble, critical LQG
 
%AMS SUBJECT CLASS: 60J67 (SLE), 60D05 (geometric probability)

\tableofcontents

\bigskip
\noindent\textbf{Acknowledgments.}
We thank William Da Silva and Nina Holden for helpful discussions. We also thank two anonymous referees for helpful comments on an earlier version of this article.
M.A.\ was supported by the Simons Foundation as a Junior Fellow at the Simons Society of Fellows.
E.G.\ was partially supported by a Clay research fellowship. 
\medskip

\section{Introduction}
\label{sec-intro}

Liouville quantum gravity (LQG) with parameter $\gamma \in (0,2]$ is a family of canonical models of random ``surfaces" (two-dimensional Riemannian manifolds) which were first studied in the physics literature in the 1980s~\cite{polyakov-qg1,david-conformal-gauge,dk-qg}. The reason for the quotations is that LQG surfaces are too rough to be Riemannian manifolds in the literal sense. Heuristically, a $\gamma$-LQG surface parametrized by a domain $U\subset \BB C$ (or more generally a Riemannian manifold) is described by the random Riemannian metric tensor
\eqb \label{eqn-lqg-tensor}
g = e^{\gamma \Phi} (dx^2 + dy^2) 
\eqe  
where $dx^2 + dy^2$ is the Euclidean metric tensor and $\Phi$ is a random generalized function on $U$ which has similar local behavior to the Gaussian free field (GFF). 
We assume that the reader is familiar with the GFF; the unfamiliar reader can consult, e.g.,~\cite{shef-gff,pw-gff-notes,bp-lqg-notes}.  
More precisely, $\Phi$ should be sampled from the ``measure" $\exp\left( - \mcl S_{\op{L}}(\phi) \right) \,D\phi$ where $D\phi$ is the ``uniform measure on functions on $U$" and $\mcl S_{\op{L}}(\phi)$ is the Liouville action. See, e.g.,~\cite{dkrv-lqg-sphere,hrv-disk} for details. 

Although the Riemannian metric tensor~\eqref{eqn-lqg-tensor} does not make literal sense, one can define various objects associated with this metric tensor using regularization procedures. A few examples of objects that have been defined in this way include the LQG area measure~\cite{kahane,rhodes-vargas-log-kpz,shef-kpz}, LQG lengths of Schramm-Loewner evolution type curves~\cite{shef-zipper}, the LQG distance function~\cite{dddf-lfpp,gm-uniqueness}, and correlation functions of the exponential of $\Phi$~\cite{dkrv-lqg-sphere}. 

One of the most important and useful features of LQG is its connection to Schramm-Loewner evolution (SLE). SLE$_\kappa$ for $\kappa > 0$ is a one-parameter family of random fractal curves first introduced in~\cite{schramm0}. 
Roughly speaking, the connection between LQG and SLE takes the following form. Suppose we have a $\gamma$-LQG surface, described by a random generalized function $\Phi$, and independently from $\Phi$ we sample an SLE$_\kappa$ curve $\eta$ (or some variant thereof), with parameter $\kappa \in \{\gamma^2,16/\gamma^2\}$. Then, if the laws of $\Phi$ and $\eta$ are matched up in exactly the right way, the LQG surfaces parametrized by the complementary connected components of $\eta$ are independent, and their laws can be described explicitly. The first statement of this type was Sheffield's \textbf{quantum zipper} theorem~\cite{shef-zipper}. There are many  extensions of this result, corresponding to different variants of SLE and LQG, see, e.g.,~\cite{wedges,msw-simple-cle-lqg,ahs-welding}. Moreover, the relationships between SLE and LQG have a huge number of applications, e.g., to random planar maps, random permutations, fractal properties of SLE and LQG, and conformal field theory. See~\cite{ghs-mating-survey} for a survey of results about SLE and LQG and their applications.

In much of the literature about SLE and LQG, results are only proven in the subcritical case, i.e., $\gamma \in (0,2)$ and $\kappa \not=4$. However, many of these results can also be extended to the critical case $\gamma=2,\kappa=4$. 
For example, see~\cite{hp-welding} for a $\gamma=2$ analog of the quantum zipper result of~\cite{shef-zipper} and~\cite{ahps-critical-mating} for a $\gamma=2$ analog of mating of trees~\cite{wedges}.

The purpose of this note is to prove a $\gamma=2,\kappa=4$ analog of the relationships between LQG and conformal loop ensembles which originally appeared in~\cite{msw-simple-cle-lqg}. 
For $\kappa \in (8/3,4]$, the \textbf{(non-nested) conformal loop ensemble (CLE$_\kappa$)} on a proper simply connected open set $U\subset\BB C$ is a random countable collection of disjoint loops in $U$ which each locally look like SLE$_\kappa$ curves. The law of CLE$_\kappa$ is conformally invariant in the sense that if $\Gamma$ is a CLE$_\kappa$ on $U$ and $f : U\to V$ is a conformal map, then $f(\Gamma)$ is a CLE$_\kappa$ on $V$. CLE$_\kappa$ can be defined via branching SLE$_\kappa(\kappa-6)$ curves~\cite{shef-cle} or equivalently as the outer boundaries of the outermost clusters of a Brownian loop soup on $U$ with intensity $\cc(\kappa)/2$~\cite{shef-werner-cle}, where we define the \textbf{central charge}
\eqb \label{eqn-sle-cc}
\cc(\kappa) := 1-6\left(\frac{2}{\sqrt\kappa} - \frac{\sqrt\kappa}{2}\right)^2 \leq 1 .
\eqe

The particular LQG surface which interacts nicely with CLE$_\kappa$ is the \textbf{$\gamma=\sqrt\kappa$-LQG disk}, which was originally defined in~\cite{wedges,hrv-disk} and which is described by a particular random generalized function $\Phi$ on the unit disk $\BB D$. We recall one possible definition as Definition~\ref{def-lqg-disk} below. 
The main theorem of this paper is the following. 

\begin{thm} \label{thm-critical-cle4} 
	Let $\Phi$ be the random generalized function on $\BB D$ which describes the critical ($\gamma=2$) LQG disk with unit boundary length (Definition~\ref{def-lqg-disk}). 
	Let $\Gamma$ be a (non-nested) CLE$_4$ in $\BB D$ sampled independently from $\Phi$. 
	For each loop $\ell \in \Gamma$, let $U_\ell \subset \BB D$ be the open region enclosed by $\ell$. 
	If we condition on the $(\gamma=2)$-LQG lengths of all of the loops in $\Gamma$ (viewed as a countable decreasing sequence of real numbers), then the $(\gamma=2)$-LQG surfaces obtained by restricting $\Phi$ to the domains $\{U_\ell\}_{\ell\in\Gamma}$ (Definition~\ref{def-lqg-surface}) are conditionally independent critical LQG disks with the given boundary lengths. 
\end{thm}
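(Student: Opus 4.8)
The plan is to obtain Theorem~\ref{thm-critical-cle4} as the $\gamma \nearrow 2$ limit of the analogous statement for $\gamma \in (\sqrt{8/3},2)$ and $\kappa = \gamma^2 \in (8/3,4)$ proven by Miller--Sheffield--Werner. Fix a sequence $\gamma_n \nearrow 2$ and, on a common probability space, couple pairs $(\Phi_n, \Gamma_n)$ --- with $\Phi_n$ describing a $\gamma_n$-LQG disk of unit boundary length and $\Gamma_n$ an independent (non-nested) CLE$_{\gamma_n^2}$ in $\BB D$ --- in such a way that $\Phi_n \to \Phi$ and $\Gamma_n \to \Gamma$ almost surely, where $\Phi$ describes the critical LQG disk of unit boundary length and $\Gamma$ is an independent (non-nested) CLE$_4$. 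For the first convergence one uses that the critical LQG disk is the limit of $\gamma$-LQG disks of the same boundary length as $\gamma \nearrow 2$, in a topology strong enough that the associated LQG area and boundary length measures also converge; this is where the critical Gaussian multiplicative chaos renormalization enters, and it should be taken as the main external input. For the second one uses that CLE$_\kappa$ converges to CLE$_4$ as $\kappa \nearrow 4$ --- for instance via the Brownian loop soup with intensities increasing to $1/2$, or via the branching SLE$_\kappa(\kappa-6)$ exploration --- in the topology in which, for a fixed enumeration $\ell_1, \ell_2, \dots$ of the loops (say by decreasing Euclidean diameter), each $\ell_i^n$ converges to $\ell_i$ in the Hausdorff metric, so that the enclosed domains $U_{\ell_i^n}$ converge to $U_{\ell_i}$ in the Carath\'eodory sense.

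Fix $k \geq 1$. For $i \leq k$ let $S_i^n$ be the $\gamma_n$-LQG surface $(U_{\ell_i^n}, \Phi_n|_{U_{\ell_i^n}})$ (Definition~\ref{def-lqg-surface}), with quantum boundary length $L_i^n$, and write $S_i$, $L_i$ for the critical analogues built from $\Phi$ and $\ell_i$. Two inputs are needed: \emph{(i)} $S_i^n \to S_i$ as quantum surfaces --- after uniformizing each $U_{\ell_i^n}$ onto $\BB D$ this reduces to convergence of $\Phi_n$ precomposed with the uniformizing maps (which converge by the Carath\'eodory convergence of the domains) together with convergence of the induced area and boundary measures; and \emph{(ii)} $L_i^n \to L_i$. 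Input (ii) is the crux. The $\gamma$-LQG length along an SLE$_\kappa$-type loop is built from the ordinary boundary multiplicative chaos of $\Phi_n$, which degenerates as $\gamma_n \nearrow 2$, so its convergence to the critical boundary length is not formal and must be established separately --- for example by comparison with a more robust observable, such as a suitably renormalized quantum area of a thin collar of $\ell_i^n$, or the boundary length read off from a conformal welding / mating-of-trees encoding (the latter being accessible via the critical mating-of-trees results of~\cite{ahps-critical-mating}). It is convenient to package (i) and (ii) as the single assertion that a $\gamma$-LQG disk decorated by an independent CLE$_\kappa$, together with the quantum lengths of its loops, converges as $\gamma \nearrow 2$ to the critical analogue.

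Granting (i) and (ii), I pass the conditional independence to the limit by a standard test-function argument. For each $n$, the Miller--Sheffield--Werner theorem gives, for all bounded continuous $F_1, \dots, F_k$ on the space of quantum surfaces and all bounded continuous $G$ of finitely many of the loop lengths of $\Gamma_n$,
\[
\mathbb{E}\!\left[ \prod_{i=1}^{k} F_i(S_i^n)\, G \right] \;=\; \mathbb{E}\!\left[ \prod_{i=1}^{k} \left( \int F_i \, d\mathrm{QD}_{\gamma_n}(L_i^n) \right) G \right],
\]
where $\mathrm{QD}_\gamma(L)$ denotes the law of the $\gamma$-LQG disk of boundary length $L$. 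The left-hand side converges to $\mathbb{E}\!\left[ \prod_i F_i(S_i)\, G \right]$ by (i), (ii) and bounded convergence. For the right-hand side one needs joint continuity of $(\gamma, L) \mapsto \int F_i \, d\mathrm{QD}_\gamma(L)$ as $\gamma \nearrow 2$, uniformly for $L$ in compact subsets of $(0,\infty)$ --- again a consequence of the convergence of $\gamma$-LQG disks to the critical one --- together with the elementary fact that loops of small quantum length contribute negligibly, so that restricting to the $k$ largest loops and afterwards letting $k \to \infty$ loses nothing. Hence the right-hand side converges to $\mathbb{E}\!\left[ \prod_i \left( \int F_i \, d\mathrm{QD}(L_i) \right) G \right]$, and equating the two limits shows that, conditionally on the quantum lengths of all loops of $\Gamma$, the surfaces $\{S_\ell\}_{\ell \in \Gamma}$ are independent critical LQG disks of the prescribed boundary lengths. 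The companion description of the joint law of the lengths via the jumps of a $3/2$-stable process would follow in the same way, by passing the Miller--Sheffield--Werner stable-process description for $\kappa = \gamma^2 \in (8/3,4)$ to the limit in the stability index as $\gamma \nearrow 2$.
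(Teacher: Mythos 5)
Your high-level strategy (take the $\gamma\to 2^-$ limit of the Miller--Sheffield--Werner statement) is the same as the paper's, and your test-function identity for transferring the conditional independence is a reasonable substitute for the paper's argument, \emph{provided} your inputs (i) and (ii) actually hold. But input (ii) --- the convergence of the $\gamma$-LQG lengths of the CLE$_{\gamma^2}$ loops to the $2$-LQG lengths of the CLE$_4$ loops --- is precisely the non-formal step that the paper identifies as one of the two main difficulties, and your proposal does not prove it: you acknowledge it is ``the crux'' and then gesture at unproven substitutes (a renormalized collar area, or a welding/mating-of-trees encoding) that do not follow from the external input you cite. The convergence of $\gamma$-LQG disks to the critical disk (\cite[Lemma 4.5]{ahps-critical-mating}) gives convergence of the field, the area measure on $\BB D$, and the boundary length measure on $\bdy\BB D$; it says nothing about GMC lengths on interior SLE$_{\kappa}$-type fractal loops whose law is also changing with $\kappa$, so (ii) cannot be waved through. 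The paper only proves \emph{tightness} of the (rescaled) loop lengths directly (Lemma~\ref{lem-length-tight}) and then identifies the subsequential limit indirectly: using \cite[Proposition 3.4]{as-cle-integrability}, the conditional law of the surface inside the loop surrounding a $\mu$-sampled marked point, given the outside annular surface, is an area-weighted LQG disk; passing this description to the limit (Lemma~\ref{lem-weighting}, via \cite[Lemma 4.3]{gp-sle-bubbles}) forces the subsequential limit $X$ of the rescaled lengths to equal $\nu_2(\ell_2^{J_2})$, because a critical LQG disk determines its own boundary length. This marked-point/Radon--Nikodym mechanism, followed by the elementary reweighting Lemma~\ref{lem-cond-weight} to remove the conditioning on which loop contains the marked point, is the actual content of the proof and is entirely absent from your proposal.

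A secondary but real issue is the normalization. The subcritical-to-critical convergence requires the $\gamma$-LQG disks to have boundary length $\ep = 2(2-\gamma)$, with area and length measures rescaled by $\ep^{-1}$; the field of a \emph{unit}-boundary-length $\gamma$-LQG disk differs from this by the additive constant $\frac{2}{\gamma}\log\ep^{-1}\to\infty$, so your assertion that you can couple unit-boundary-length fields $\Phi_n$ with $\Phi_n\to\Phi$ a.s.\ (and with converging area measures) is false as stated, and the loop lengths you call $L_i^n$ are really the $\ep^{-1}$-rescaled lengths of the paper. This is fixable bookkeeping, but it matters because the entire limiting argument, including your continuity claim for $(\gamma,L)\mapsto\int F_i\,d\mathrm{QD}_\gamma(L)$, has to be run with the correct rescaling. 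With (ii) unproven, the proposal as written has a genuine gap at its central step.
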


The analog of Theorem~\ref{thm-critical-cle4} for $\gamma \in (\sqrt{8/3},2)$ and $\kappa=\gamma^2 \in (8/3,4)$ is immediate from~\cite[Theorem 1.1]{msw-simple-cle-lqg} (see Theorem~\ref{thm-subcritical-coupling} below). We will deduce Theorem~\ref{thm-critical-cle4} from this statement, plus a minor extension coming from~\cite{acsw-loop}, by taking the limit as $\gamma \to 2^-$. This is similar to the strategy used to establish relationships between critical LQG and SLE$_4$ in~\cite{hp-welding,ahps-critical-mating}. In fact, a key input in our proof is \cite[Lemma 4.5]{ahps-critical-mating}, which gives the convergence of the $\gamma$-LQG disk to critical LQG disk as $\gamma\to 2^-$. 

Theorem~\ref{thm-critical-cle4} can be viewed as a continuum analog of a certain Markovian property for random planar maps decorated by the $O(2)$ loop model. Indeed, for such random planar maps, if one conditions on the lengths of the outermost $O(2)$ loops, then the planar maps enclosed by the loops are conditionally independent planar maps with boundary decorated by the $O(2)$ loop model. See~\cite{bbg-recursive-approach,bbg-bending} for details. 

In addition to its intrinsic interest, Theorem~\ref{thm-critical-cle4} is an important input in the paper~\cite{ag-supercritical-cle4}, in which we establish a coupling between LQG and CLE$_4$ in the \textbf{supercritical} case, which corresponds to $\gamma\in \BB C$ with $|\gamma|=2$, or equivalently to central charge values in $(1,25)$. 

We will also establish an explicit description of the joint law of the 2-LQG lengths of the CLE$_4$ loops in the setting of Theorem~\ref{thm-critical-cle4}. The corresponding description in the subcritical case is stated at the end of \cite[Section 1]{ccm-cascade} and follows from results in~\cite{ccm-cascade,bbck-growth-frag,msw-simple-cle-lqg}  (see Theorem~\ref{thm-subcritical-lengths} below).  

\begin{thm}  \label{thm-critical-lengths} 
	Assume that we are in the setting of Theorem~\ref{thm-critical-cle4}. 
	Let $\zeta$ be a $3/2$-stable L\'evy process with no downward jumps, with L\'evy measure $\BB 1_{(x>0)} x^{-5/2} \,dx$.
	Let $P $ be the law of $\zeta$.  
	Let $\tau  = \inf\{t\geq 0 : \zeta(t) = -1\}$.
	Let $\{X^j\}_{j\in\BB N}$ be the sequence of upward jumps of $\zeta$ on $[0,\tau]$, listed in decreasing order of their sizes.
	Also let $\{\ell^j\}_{j\in\BB N}$ be the loops of the CLE$_4$ $\Gamma$, listed in decreasing order of their critical LQG lengths $\nu_\Phi(\ell^j)$ with respect to the critical LQG disk field $\Phi$. 
	Then the law of $\{\nu_\Phi(\ell^j)\}_{j\in\BB N}$ is the same as the law of $\{X^j\}_{j\in\BB N}$ under the re-weighted probability measure $(\tau^{-1} / \BB E[\tau^{-1}]) P $. 
\end{thm}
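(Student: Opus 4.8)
The plan is to obtain this statement as the $\gamma \to 2^-$ limit of the corresponding subcritical statement (Theorem~\ref{thm-subcritical-lengths}), using the same limiting machinery that underlies the proof of Theorem~\ref{thm-critical-cle4}. First I would recall the subcritical description: for $\gamma \in (\sqrt{8/3},2)$ and $\kappa = \gamma^2$, the law of the $\gamma$-LQG lengths of the CLE$_\kappa$ loops (in decreasing order) in a $\gamma$-LQG disk of unit boundary length is given, up to a reweighting by (hitting time)$^{-1}$ normalized to be a probability, by the ordered sequence of upward jumps of an $\alpha(\gamma)$-stable L\'evy process with no downward jumps run until it hits $-1$, where $\alpha(\gamma) \in (1,3/2)$ is the relevant stability exponent and the L\'evy measure is (a constant times) $\BB 1_{(x>0)} x^{-1-\alpha(\gamma)}\,dx$; as $\gamma \to 2^-$ one has $\kappa \to 4^-$ and $\alpha(\gamma) \to 3/2$. (I would check the precise form of the exponent and L\'evy measure against~\cite{ccm-cascade,bbck-growth-frag}; the exponent is $\kappa/4 + 1/2$ or a similar explicit function, which indeed tends to $3/2$.)

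The next step is the convergence of the two sides separately. On the probabilistic side, I would show that the $\alpha(\gamma)$-stable L\'evy process $\zeta_\gamma$ with L\'evy measure $c_\gamma \BB 1_{(x>0)} x^{-1-\alpha(\gamma)}\,dx$ converges in law (in the Skorokhod topology on $[0,\infty)$, after possibly rescaling space by a deterministic constant to match normalizations) to the $3/2$-stable process $\zeta$ of the theorem statement as $\gamma \to 2^-$; this is standard since the L\'evy measures converge vaguely on $(0,\infty)$ and the processes are spectrally positive stable. From Skorokhod convergence one gets joint convergence of the hitting time $\tau_\gamma = \inf\{t : \zeta_\gamma(t) = -1\}$ to $\tau$ (using that $-1$ is hit by a jump-free downcrossing, so the hitting time is a.s.\ continuous in the path) and of the ordered jump sequences $\{X^j_\gamma\}_j \to \{X^j\}_j$ coordinatewise. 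To pass the reweighting through the limit I would establish uniform integrability of $\{\tau_\gamma^{-1}\}$: since $\zeta_\gamma$ has no downward jumps, $\tau_\gamma^{-1}$ has moments of all orders with bounds uniform in $\gamma$ near $2$ (the Laplace transform $\BB E[e^{-q\tau_\gamma}]$ is explicit via the inverse of the Laplace exponent $\psi_\gamma(q) = c_\gamma' q^{\alpha(\gamma)}$), which lets me conclude that the reweighted laws $(\tau_\gamma^{-1}/\BB E[\tau_\gamma^{-1}])\,P_\gamma$ converge weakly to $(\tau^{-1}/\BB E[\tau^{-1}])\,P$ in the sense of convergence of the ordered jump sequences. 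On the LQG side, I would invoke \cite[Lemma 4.5]{ahps-critical-mating} (the convergence of the $\gamma$-LQG disk to the critical LQG disk) together with the joint convergence of $(\Phi_\gamma, \Gamma)$ to $(\Phi,\Gamma)$ already used for Theorem~\ref{thm-critical-cle4}, and show that the ordered sequence of LQG lengths $\{\nu_{\Phi_\gamma}(\ell^j_\gamma)\}_j$ converges in law to $\{\nu_\Phi(\ell^j)\}_j$; this requires knowing that the $\gamma$-LQG length measure on the CLE loops converges to the critical one, which should follow from the convergence of the boundary length measures established in the course of proving Theorem~\ref{thm-critical-cle4} (or can be extracted from \cite{hp-welding}).

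Combining: for each $\gamma$ the two ordered sequences have the same law, and each converges in law as $\gamma \to 2^-$; hence the limits agree, which is exactly the claim. The main obstacle I anticipate is the last convergence statement on the LQG side --- controlling the ordered sequence of LQG lengths of \emph{all} CLE loops simultaneously, rather than finitely many, and in particular ruling out that length is ``lost in the limit'' to small loops (a tightness/no-escape-of-mass argument for the length measure near $\gamma = 2$). A secondary technical point is correctly matching the normalization constants: the subcritical theorem comes with a $\gamma$-dependent scaling of the L\'evy measure (inherited from the normalization of the LQG boundary length measure and the CLE loop intensity), and one must check that with the normalization conventions of Definition~\ref{def-lqg-disk} and the $\gamma \to 2$ limit of \cite[Lemma 4.5]{ahps-critical-mating}, the limiting L\'evy measure is exactly $\BB 1_{(x>0)} x^{-5/2}\,dx$ with no stray constant. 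I expect the reweighting-by-$\tau^{-1}$ to survive the limit cleanly given the uniform moment bounds, so that piece should be routine once the convergence of the unweighted objects is in hand.
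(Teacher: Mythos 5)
Your overall strategy matches the paper's: take the subcritical statement (Theorem~\ref{thm-subcritical-lengths}), pass the L\'evy-process side to the limit, and separately show that the ordered sequence of re-scaled LQG loop lengths converges to the critical one. The L\'evy-side portion of your argument is fine (the paper simply cites a standard convergence theorem for L\'evy processes, Lemma~\ref{lem-levy-conv}); also note that the correct stability exponent is $\beta(\gamma)=4/\gamma^2+1/2\in(3/2,2)$, approaching $3/2$ from above, not from below as you guessed, though this does not affect the structure.

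The genuine gap is exactly the step you flag as your ``main anticipated obstacle'' and then do not prove: the convergence in law of the full ordered sequence $\{\ep^{-1}\nu_\gamma(\ell_\gamma^j)\}_{j\in\BB N}\to\{\nu_2(\ell_2^j)\}_{j\in\BB N}$. Your suggestion that this ``should follow from the convergence of the boundary length measures'' does not work: the measures that converge in Lemma~\ref{lem-disk-conv} live on $\BB D$ and $\bdy\BB D$, while $\nu_\gamma(\ell_\gamma^j)$ is the LQG length of an interior SLE-type loop, which is not a continuous functional of the field in $\mcl H^{-1}$ together with the loop in the Hausdorff distance; the paper explicitly points out in Section~\ref{sec-outline} that this convergence is not obvious. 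The paper's proof of this step is the bulk of Section~\ref{sec-length-proof}: the length of the origin-surrounding loop is shown to converge only indirectly, by identifying the subsequential limit through the conditional-law characterization of Lemma~\ref{lem-weighting} (which rests on Theorem~\ref{thm-subcritical-coupling} and \cite[Proposition 3.4]{as-cle-integrability}); this is then upgraded to all loops by sampling countably many marked points $\{Z_\gamma(m)\}$ from the area measure, proving tightness of the loop indices $J_\gamma$ (Lemma~\ref{lem-index-tight}) and of the hitting indices $M_\gamma^j$ (Lemma~\ref{lem-inverse-tight}), and using a Skorokhod coupling in which the integer-valued indices stabilize, so that every fixed rank $j$ is eventually realized by some marked point (Proposition~\ref{prop-ordered-loop-conv}). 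None of this machinery, or a substitute for it, appears in your proposal, so as written the argument for the LQG side is incomplete.
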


As explained in~\cite[Section 5.2.2]{ahps-critical-mating}, the joint law of $\{\nu_\Phi(\ell^j)\}_{j\in\BB N}$ can alternatively be described in terms of the growth fragmentation process of~\cite{ad-growth-fragmentation}. It was pointed out to us by W.\ Da Silva in private communication that Theorem~\ref{thm-critical-lengths} can be deduced from this description using growth fragmentation techniques. However, in this paper we will prove Theorem~\ref{thm-critical-lengths} via a limiting argument based on the analogous result for $\gamma \in (\sqrt{8/3},2)$.

\begin{remark}
	Kammerer~\cite[Theorem 1.1]{kammerer-critical-lqg-cle4} proves a stronger version of Theorem~\ref{thm-critical-cle4} which says that all of the domains cut out by an SLE$_4(-2)$ on a critical LQG disk are conditionally independent critical LQG disks given their boundary lengths, not just the domains that correspond to CLE$_4$ loops. This gives a $\gamma=2 , \kappa=4$ analog of~\cite[Theorem 1.2]{msw-simple-cle-lqg}, and is proven by taking a limit of SLE$_\kappa(\kappa-6)$ as $\kappa$ increases to $2$.
\end{remark}

\section{Preliminaries}
\label{sec-prelim}

\subsection{LQG surfaces}\label{sec-lqg-surfaces}

We define the infinite strip
\eqb  \label{eqn-strip}
\mcl S = \BB R \times (0,2\pi)
\eqe  
The \textbf{free boundary GFF} on $\mcl S$ is the centered Gaussian generalized function $ \wt\Phi$ on $\mcl S$ with covariance kernel
\eqb  \label{eqn-free-bdy}
G(z,w) = - \log |e^z- e^w| -\log |e^z- e^{\ol w}| + \max \{2 \re z , 0\} + \max\{2 \re w , 0\} .
\eqe 
One often views the free boundary GFF modulo additive constant.
Our choice of covariance kernel in~\eqref{eqn-free-bdy} corresponds to normalizing so that the average of $\wt\Phi$ over the vertical segment $\{0\} \times (0,2\pi)$ is zero. The free boundary GFF on simple connected domains other than $\mcl S$ can be defined via conformal invariance. We refer to~\cite[Section 6]{bp-lqg-notes} for more on the free boundary GFF. 

Following~\cite{shef-kpz,shef-zipper,wedges}, we view LQG surfaces as equivalence classes of domains decorated by generalized functions, modulo conformal maps. 

\begin{defn} \label{def-lqg-surface}
	Let $\gamma\in (0,2]$ and let 
	\eqb \label{eqn-Q-def}
	Q = Q_\gamma = \frac{2}{\gamma} + \frac{\gamma}{2} .  
	\eqe
	Let $k\geq 0$ and consider the set of $k+2$-tuples $(U,\Phi,x_1,\dots,x_k)$, where $U\subset \BB C$ is open, $\Phi$ is a generalized function on $U$ (which we will always take to be random, and in particular to be some variant of the GFF), and $x_1,\dots,x_k\in U\cup \bdy U$. We define an equivalence relation ${\sim}_Q$ on such $k+2$-tuples by
	\eqb \label{eqn-lqg-equiv}
	(U,\Phi,x_1,\dots,x_k) \sim_Q (\wt U,\wt\Phi, \wt x_1,\dots,\wt x_k)
	\eqe 
	if there is a conformal map $f : \wt U \to U$ such that 
	\eqb \label{eqn-lqg-coord}
	\wt\Phi = \Phi\circ f +Q\log|f'| \quad \text{and} \quad f(\wt x_j) = x_j,\quad\forall j=1,\dots,k .
	\eqe 
	A \textbf{$\gamma$-LQG surface} with $k$ marked points, is an equivalence class under ${\sim}_Q$.
	If $(U,\Phi,x_1,\dots,x_k)$ is an equivalence class representative of an LQG surface $\mcl S$, we refer to $\Phi$ as an \textbf{embedding} of $\mcl S$ into $(U,x_1,\dots,x_k)$. 
\end{defn}

We think of two equivalent $k+2$-tuples as in~\eqref{eqn-lqg-equiv} as two different parametrizations of the same LQG surface.
We can extend Definition~\ref{def-lqg-surface} in the obvious way to define LQG surfaces with additional decoration. 
We will need two particular special cases of this.
\begin{enumerate}[$(i)$]
	\item Let $U\subset \BB C$ be a domain with the topology of an annulus, viewed as a subset of the Riemann sphere, and let $\bdy^{\op{out}} U $ be one of the two connected components of $\bdy U$. A \textbf{$\gamma$-LQG surface with the annulus topology, with a distinguished boundary component}, is an equivalence class of triples $(U,\bdy^{\op{out}} U , \Phi)$, with two such triples $(U,\bdy^{\op{out}} U , \Phi)$ and $(\wt U,\bdy^{\op{out}} \wt U , \wt \Phi)$ declared to be equivalent (denoted by ${\sim}_Q$ as above) if there is a conformal map $f: \wt U \to U$ such that the condition~\eqref{eqn-lqg-coord} holds for $\Phi$ and also $f(\bdy^{\op{out}}\wt U) = \bdy^{\op{out}} U$. 
	\item Let $U\subset\BB C$ be open and let $G\subset  U$ be a connected set whose intersection with every compact subset of $U$ is compact. A \textbf{set-decorated $\gamma$-LQG surface} is an equivalence class of triples $(U,\Phi,G)$, with $U$ and $G$ as above, with two such triples $(U,\Phi,G)$ and $(\wt U , \wt \Phi , \wt G)$ declared to be equivalent (still denoted by ${\sim}_Q$) if there is a conformal map as in~\eqref{eqn-lqg-coord} which also satisfies $\Phi(\wt G) = G$. 
\end{enumerate}

Suppose now that $(U,\Phi,x_1,\dots,x_k) / {\sim}_Q$ is an LQG surface and $\Phi$ is a \textbf{GFF plus a continuous function}, i.e., $\Phi = \Phi^0 + \psi$ where $\Phi^0$ is a zero-boundary GFF on $U$ (or a whole-plane GFF, if $U=\BB C$) and $\psi$ is a (possibly random) continuous function on $U$, coupled with $\Phi^0$ in an arbitrary way. 
Using the theory of Gaussian multiplicative chaos, we can define on $U$ the \textbf{$\gamma$-LQG area measure} $\mu_\Phi = \mu_{\gamma,\Phi}$ which is the limit of regularized versions of $e^{\gamma \Phi(z)} \,d^2 z$, where $d^2 z$ denotes Lebesgue measure on $U$~\cite{kahane,shef-kpz,rhodes-vargas-review,berestycki-gmt-elementary} (see~\cite{shef-deriv-mart,shef-renormalization} for the critical case). 

If $\bdy U$ is a piecewise linear simple curve and $\Phi$ can be written as the sum of a free-boundary GFF on $U$ and a continuous function on $\ol U$, we can similarly define the \textbf{$\gamma$-LQG boundary length measure}  $\nu_\Phi = \nu_{\gamma,\Phi}$ which is the limit of regularized versions of $e^{(\gamma/2) \Phi(z)} |dz|$, where $|dz|$ denotes the length measure on $\bdy U$ (see, e.g.,~\cite[Section 6]{shef-kpz} or~\cite[Section 6.5]{bp-lqg-notes}). 

The measure $\mu_\Phi$ is compatible with the coordinate change formula~\eqref{eqn-lqg-coord}, in the following sense. Whenever $f , \Phi$, and $\wt\Phi$ are as in~\eqref{eqn-lqg-coord}, one has $\mu_\Phi = f_* \mu_{\wt\Phi}$~\cite{shef-kpz,shef-wang-lqg-coord} (see~\cite[Theorem 13]{shef-renormalization} for the critical case). 

If $\bdy U$ and $\bdy \wt U$ are both piecewise linear simple curves, then also $\nu_\Phi = f_*\nu_{\wt \Phi}$ (see~\cite[Section 6]{shef-kpz} for the subcritical case, the critical case can be obtained, e.g., by taking the limit as $\gamma\to 2^-$~\cite[Section 4.1]{aps-critical-lqg-lim}). 
Using this, one can extend the definition of $\nu_\Phi$ to LQG surfaces whose boundary is a simple curve (not necessarily piecewise linear) by conformally mapping to a domain whose boundary is a piecewise linear simple curve.
One can also extend the definition to domains whose boundaries have multiple connected components e.g., by using local absolute continuity.  
In particular, one can define the LQG length measure $\nu_\Phi$ on SLE$_\kappa$-type curves sampled independently from $\Phi$, for $\kappa = \gamma^2$~\cite{shef-zipper}. The measure $\nu_\Phi$ on such an SLE curve can also be defined as a Gaussian multiplicative chaos measure with respect to the Euclidean Minkowski content measure on the curve~\cite{benoist-lqg-chaos}. Similarly as with $\mu_\Phi$, the measure $\nu_\Phi$ is compatible with LQG coordinate change~\eqref{eqn-lqg-coord}, so it makes sense to speak of the $\gamma$-LQG boundary length measure of a $\gamma$-LQG surface.

\subsection{The LQG disk}
\label{sec-lqg-disk}

The main type of LQG surface we will consider in this paper is the $\gamma$-LQG disk for $\gamma\in (0,2]$, which was first defined in~\cite{wedges,hrv-disk}. We will give the definition from~\cite{wedges}. This definition is easier to state if we parametrize the surface by the infinite strip instead of the disk, due to the decomposition just below.

\begin{defn} \label{def-strip-decomp} 
	Let $\wt\Phi$ be a free-boundary GFF on the strip $\mcl S$, as defined in~\eqref{eqn-free-bdy}. 
	For $u \in \BB R$, we define $\wt\Phi^| : \mcl S \to \BB R$ to be the random function which, on each vertical segment $\{x\}\times (0,2\pi)$, is identically equal to the average of $\wt\Phi$ on this segment.
	We define the \textbf{lateral part} of $\wt\Phi$ by $\wt\Phi^\dagger = \wt\Phi - \wt\Phi^|$. 
\end{defn}

It follows from~\cite[Lemma 4.2]{wedges} that the random function $\wt\Phi^|$ (viewed modulo additive constant) and the random generalized function $\wt\Phi^\dagger$ are independent. We also note that $\wt\Phi^\dagger$ does not depend on the choice of additive constant for $\wt\Phi$.  

\begin{defn}[LQG disk] \label{def-lqg-disk}
	Let $\gamma \in (0,2]$. 
	Let $\mcl B :\BB R \to (-\infty,0]$ be a random function such that $\{\mcl B_s\}_{s\geq 0}$ and $\{\mcl B_{-s}\}_{s\geq 0}$ are independent and each has the law of the process $s\mapsto B_{2s} - (2/\gamma-\gamma/2)s$ conditioned to be negative for all time, where $B$ is a standard Brownian motion started from 0 (see \cite[Remark 4.4]{wedges} for an explanation of how to make sense of this conditioning).  
	With $\mcl S$ as in Definition~\ref{def-strip-decomp}, let $\Phi_0^|  : \mcl S \to \BB R$ be the random generalized function which is identically equal to $\mcl B_x$ on each vertical segment $\{x\}\times (0,2\pi)$. 
	Also let $\wt\Phi^\dagger$ be the lateral part of a free-boundary GFF on $\mcl S$, as in Definition~\ref{def-strip-decomp}, sampled independently from $\Phi_0^|$ and let 
	\eqbn
	\Phi_0 = \Phi_0^| + \wt\Phi^\dagger
	\eqen
	\begin{itemize}
		\item Let $\nu_{\Phi_0} = ``e^{\frac\gamma2\Phi_0(z)} \, |dz|"$ be the $\gamma$-LQG boundary length measure on $\partial \mcl S$ associated with $\Phi_0$ and let 
		\eqbn
		\Phi :=  \wh \Phi_0 -  \frac2\gamma\log \nu_{\wh\Phi_0}(\bdy \mcl S)   
		\eqen 
		where $\wh\Phi_0$ is sampled from the law of of $\Phi_0$, re-weighted by $[\nu_{\Phi_0}(\bdy\mcl S)]^{4/\gamma^2-1} / \BB E\left[[\nu_{\Phi_0}(\bdy\mcl S)]^{4/\gamma^2-1}  \right]$. 
		For $L > 0$, the \textbf{doubly marked $\gamma$-LQG disk with boundary length $L$} is the $\gamma$-LQG surface $(\mcl S , \Phi + \frac{2}{\gamma}\log L , -\infty,+\infty) / {\sim}_Q$. 
		\item The \textbf{singly marked  $\gamma$-LQG disk with boundary length $L$} is the  LQG surface $(\mcl S , \Phi + \frac{2}{\gamma}\log L , -\infty) / {\sim}_Q$, and the \textbf{unmarked $\gamma$-LQG disk with boundary length $L$} is the LQG surface $(\mcl S , \Phi + \frac{2}{\gamma}\log L) / {\sim}_Q$).
	\end{itemize}
\end{defn} 

Definition~\ref{def-lqg-disk} makes sense both for $\gamma\in (0,2)$ and for $\gamma=2$ since  $\BB E[[\nu_{\Phi_0}(\bdy\mcl S)]^{4/\gamma^2-1} ]$ is finite; when $\gamma < 2$ this moment is computed in \cite[Theorem 1.8, (1.34)]{rz-boundary-lcft} (with parameters $\beta = \gamma$ and $\mu_1=\mu_2=1$), and when $\gamma = 2$ it trivially evaluates to $1$. In the critical case $\gamma=2$, we have $2/\gamma-\gamma/2 = 0$, so each of $\{\mcl B_s\}_{s\geq 0}$ and $\{\mcl B_{-s} \}_{s\geq 0}$ has the law of $\sqrt 2$ times a standard Brownian motion conditioned to be negative, i.e., $(-\sqrt 2)$ times a 3d Bessel process started at 0.
Furthermore, we have $4/\gamma^2-1 = 1$, so $\wh\Phi_0 \eqD \Phi_0$.

For $\gamma < 2$, the $\gamma$-LQG disk was first introduced in \cite{wedges}, via a different description involving infinite measures. Their definition, after conditioning on the boundary length being $L$, agrees with Definition~\ref{def-lqg-disk}; see \cite[Remark 2.5]{hs-cardy-embedding} for an explanation of equivalence (it is stated for $\gamma=\sqrt{8/3}$, but the explanation holds for all $\gamma < 2$). We note for later use that the definition used in \cite{acsw-loop} agrees with that of \cite{wedges}, and hence Definition~\ref{def-lqg-disk}.
An alternative definition of the $\gamma$-LQG disk for $\gamma\in (0,2)$, based on conformal field theory, is given in~\cite{hrv-disk}. The definition there and the one in Definition~\ref{def-lqg-disk} are proven to be equivalent in~\cite{cercle-quantum-disk} (see also \cite{ahs-integrability}).

We will typically parametrize the LQG disks in this paper by the unit disk $\BB D$ rather than the strip $\mcl S$. An embedding of the LQG disk into $\BB D$ can be obtained from the embedding in Definition~\ref{def-lqg-disk} by applying a conformal map $\BB D \to \mcl S$ and changing coordinates via~\eqref{eqn-lqg-coord}.

\subsection{Topology on LQG surfaces} 
\label{sec-surface-topology}

We will need to talk about convergence of LQG surfaces, possibly with some decoration, as $\gamma \uparrow 2$. 
For this purpose we need to introduce a topology on the space of LQG surfaces.
The definitions in this section are similar to ones elsewhere in the literature, e.g.,~\cite[Section 2.2.5]{gwynne-miller-char}. 

\subsubsection{Set-decorated LQG surfaces with the annulus topology} 
\label{sec-annulus-metric}

Consider the set of triples $  (U , \bdy^{\op{out}} U , \mu ,  G)$ where 
\begin{itemize}
	\item $U\subset \BB C$ is an open domain with the topology of the annulus and $ \bdy^{\op{out}} U$ is a distinguished non-singleton boundary component of $U$, which we call the \textbf{outer boundary}. We allow the other boundary component of $U$ to be a single point, so, e.g., $U$ could be a punctured disk. 
	\item $\mu$ is a finite Borel measure on $U$.
	\item $G \subset U $ is a set whose intersection with each compact subset of $U$ is compact.
\end{itemize}
We declare that two such triples $(U , \bdy^{\op{out}} U , \mu ,  G)$ and $(\wt U , \bdy^{\op{out}} \wt U , \wt\mu ,  \wt G)$ are equivalent, denoted $(U , \bdy^{\op{out}} U , \mu ,  G) \sim (\wt U , \bdy^{\op{out}} \wt U , \wt\mu ,  \wt G)$, if there is a conformal map $f : U \to \wt U$ such that
\eqbn
f(\bdy^{\op{out}} U) = \bdy^{\op{out}} \wt U ,\quad f_*\mu = \wt\mu  ,  \quad \text{and} \quad f(G) = \wt G .
\eqen
Note that the existence of such a conformal map implies that the conformal moduli of the annuli $U$ and $\wt U$ are the same. 
Let $\BB A $ denote the set of equivalence classes under this equivalence relation. 

Now suppose that $Q  =2/\gamma + \gamma/2 > 2$ and $\mcl A  = (U , \bdy^{\op{out}} U ,  \Phi ,  G) /{\sim}_Q$ is a $\gamma$-LQG surface with the annulus topology decorated by a set $G$ (see the discussion just after Definition~\ref{def-lqg-surface}), where $U, \bdy^{\op{out}} U$, and $G$ satisfy the properties above and the $\gamma$-LQG area measure $\mu_\Phi$ a.s.\ has finite total mass. 
We identify $\mcl A$ with the equivalence class\footnote{\label{footnote-ep} The reason for scaling LQG areas by $\ep$ in~\eqref{eqn-surface-to-A} is that this is the scaling factor needed to make the subcritical LQG measure converge to the critical LQG measure as $\gamma\to 2^-$, see~\cite[Lemma 4.5]{ahps-critical-mating}.} 
\eqb \label{eqn-surface-to-A}
\frk A = (U , \bdy^{\op{out}} U ,  \ep^{-1} \mu_\Phi ,  G) / {\sim}  \in \BB A  ,\quad \text{where} \quad \ep = 2(2-\gamma) .
\eqe 
We make the same identification in the critical case $Q =\gamma=2$, except that we omit the factor of $\ep$ (i.e., we take $\ep =1$). It was shown in~\cite{bss-lqg-gff} that $\Phi$ is a.s.\ given by a measurable function of $\mu_\Phi$. Hence, a.s.\ $\mcl A$ is a measurable function of $\frk A$. 

An LQG surface $(U,\Phi,z)/{\sim}_Q$ with the disk topology, with exactly one marked interior point $z\in U$, can be identified with the set-decorated LQG surface with the annulus topology $(U \setminus\{z\} , \bdy U , \Phi , U\setminus \{0\} )/{\sim}_Q$, and can therefore also be identified with an element of $\BB A$. 

We define a metric $D_{\BB A}$ on $\BB A $ as follows. 
By the Riemann mapping theorem for annuli, for any $\frk A , \wt{\frk A} \in \BB A $ there exists unique numbers $r , \wt r \in [0,1)$ for which there exist equivalence class representatives 
\eqbn
( \BB D \setminus \ol{B_r(0)} , \bdy\BB D , \mu , G) \in \frk A \quad \text{and} \quad
( \BB D \setminus \ol{B_{\wt r}(0)} , \bdy\BB D , \wt\mu , \wt G) \in \wt{\frk A} .
\eqen
We view $\mu$ and $\wt\mu$ as measures on $\BB C$ (extended to be zero outside of their respective annuli) and we view $\ol G$ and $\ol{\wt G}$ as compact subsets of $\BB C$. We write $D_P$ and $D_H$ for the Prokhorov distance and Hausdorff distance, respectively, and we define 
\eqb \label{eqn-annulus-metric}
D_{\BB A}(\frk A , \wt{\frk A}) 
= \inf \left( D_P(\mu,\wt\mu) + D_H (\ol G, \ol{\wt G} ) + |r-\wt r| \right) 
\eqe 
where the inf is over all choices of equivalence class representatives as above (equivalently, over all rotations of $\BB D$). Then $D^{\op{disk}}$ is a metric on $\BB A$. 

Whenever we talk about convergence of LQG surfaces, it will be with respect to the topology induced by the above metric.

\subsubsection{LQG surfaces with the disk topology without a marked interior point} 
\label{sec-disk-metric} 

We will also have occasion to talk about LQG surfaces with the disk topology which do not have interior marked points (e.g., in the setting of Theorem~\ref{thm-critical-cle4}). In order to view such surfaces as random variables, we specify a topology (hence also a $\sigma$-algebra) on the set of such surfaces. We treat the case of LQG surfaces with no marked points. 
The case of one, two, or three marked points is treated similarly (except that we sample $3-k$ marked points from the boundary length measure instead of 3 at the end of the construction). 

Fix $Q > 0$ and consider the set of triples $ (U , \nu , \Phi)$ where  
\begin{itemize}
	\item $U\subset \BB C$ is an open simply connected domain, $U\not=\BB C$. 
	\item $\nu$ is a finite Borel measure on $\bdy U$ (viewed as a set of prime ends).
	\item $\Phi$ is a generalized function on $U$ which belongs to the local $-1$-order Sobolev space $\mcl H_{\op{loc}}^{-1}(U)$. 
\end{itemize}
We declare that two such triples $(U,\nu,\Phi)$ and $(\wt U , \wt\nu,\wt\Phi)$ are equivalent, denoted $(U,\nu,\Phi) \sim_Q (\wt U , \wt\nu,\wt\Phi)$, if there is a conformal map $f : \wt U  \to U $ such that
\eqbn
f_*\wt\nu =\nu \quad\text{and} \quad \Phi\circ f + Q \log |f'|  = \wt\Phi .
\eqen
We write $\BB S$ for the set of equivalence classes.

If $Q  =2/\gamma + \gamma/2 > 2$ and $\mcl S  = (U ,   \Phi  ) /{\sim}_Q$ is an unmarked $\gamma$-LQG surface with the disk topology whose $\gamma$-LQG boundary length measure $\nu_\Phi$ is well-defined and finite, we identify $\mcl S$ with the equivalence class 
\eqb \label{eqn-surface-to-S}
\frk S = (U ,   \nu_\Phi , \Phi ) / {\sim}_Q  \in \BB S .
\eqe  
Note that, unlike in~\eqref{eqn-surface-to-A}, we do not include a factor of $\ep^{-1}$ in~\eqref{eqn-surface-to-S} since the topology of this subsection is only used to define a $\sigma$-algebra, not to talk about convergence. 

We define a metric $D_{\BB S}$ on $\BB S $ as follows. Let $\frk S  = (U,\nu,\Phi)/{\sim}_Q$ and $\wt{\frk S} = (\wt U ,\wt \nu ,\wt\Phi )/{\sim}_Q$ be elements of $\BB S$. Let $x_1,x_2,x_3 \in \bdy U$ be independent samples from the probability measure $\nu /\nu(\bdy U)$ and let $f : \BB D \to U$ be the conformal map with $f(1) = x_1$, $f(i) = x_2$, and $f(-1) = x_3$. 
Define
\eqbn
\nu^\bullet := f^* \nu \quad \text{and} \quad   \Phi^\bullet := \Phi\circ f + Q\log|f'|  .
\eqen
Similarly define $\wt\nu^\bullet$ and $\wt\Phi^\bullet$. 

We define $D_{\BB S}(\frk S , \frk S')$ to be the Prokhorov distance between the laws of the pairs $(\nu^\bullet,\Phi^\bullet)$ and $(\wt\nu^\bullet,\wt\Phi^\bullet)$ with respect to the Prokhorov distance for measures on $\bdy\BB D$ in the first coordinate and the $\mcl H^{-1}_{\op{loc}}$ metric on the second coordinate. We emphasize that the only randomness here is coming from the choices of $x_1,x_2,x_3$ and $\wt x_1,\wt x_2,\wt x_3$. Then $D_{\BB S}$ is a metric on $\BB S$.

\section{Proof of the main results}
\label{sec-critical-cle4}

\subsection{Outline of the proof}
\label{sec-outline}

From \cite[Theorem 1.1]{msw-simple-cle-lqg}, we have the following analog of Theorem~\ref{thm-critical-cle4} for $\gamma \in (\sqrt{8/3},2)$ and $\kappa = \gamma^2 \in (8/3,2)$.

\begin{thm} \label{thm-subcritical-coupling} 
	Let $\gamma \in (\sqrt{8/3},2)$ and let $(\BB D , \Phi_\gamma)/{\sim}_{Q_\gamma}$ be an unmarked $\gamma$-LQG disk with unit boundary length, with $\Phi_\gamma$ as in Definition~\ref{def-lqg-disk}. 
	Let $\Gamma_\gamma$ be a (non-nested) CLE$_{\kappa=\gamma^2}$ in $\BB D$ sampled independently from $\Phi_\gamma$. 
	For each loop $\ell \in \Gamma_\gamma$, let $U_\ell \subset \BB D$ be the open region enclosed by $\ell$. 
	If we condition on the $\gamma$-LQG lengths of all of the loops in $\Gamma_\gamma$, then the $\gamma $-LQG surfaces obtained by restricting $\Phi_\gamma$ to the domains $\{U_\ell\}_{\ell\in\Gamma_\gamma}$ (Definition~\ref{def-lqg-surface}) are conditionally independent $\gamma$-LQG disks with the given boundary lengths.
\end{thm}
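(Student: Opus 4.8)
The plan is to deduce Theorem~\ref{thm-subcritical-coupling} directly from \cite[Theorem~1.1]{msw-simple-cle-lqg}, so the ``proof'' amounts to a translation of conventions rather than a new argument. First I would write out the Miller--Sheffield--Werner statement in full and match it term by term with Theorem~\ref{thm-subcritical-coupling}: in both cases the ambient surface is a quantum disk of unit boundary length, the decoration is an independently sampled non-nested CLE$_{\kappa}$ with $\kappa=\gamma^2\in(8/3,4)$, and the conclusion is that, conditionally on the quantum lengths of the loops, the quantum surfaces enclosed by the loops are independent quantum disks of the prescribed boundary lengths.

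There are three points to verify in this dictionary. (i) The $\gamma$-LQG disk of Definition~\ref{def-lqg-disk} (the quantum disk of \cite{wedges}) must be identified with the unit boundary length quantum disk used in \cite{msw-simple-cle-lqg}; this is standard, and if one wants to pass between different presentations of the quantum disk one can invoke the equivalences established in \cite{cercle-quantum-disk,ahs-integrability}. (ii) The CLE$_\kappa$ normalizations must agree---non-nested, loops locally absolutely continuous with respect to SLE$_\kappa$, equivalently the outermost cluster boundaries of a Brownian loop soup of intensity $\cc(\kappa)/2$; both references use this convention. (iii) The conditioning must be meaningful: since each loop $\ell\in\Gamma_\gamma$ is a.s.\ a curve of SLE$_{\gamma^2}$ type, the $\gamma$-LQG length $\nu_{\Phi_\gamma}(\ell)$ is a well-defined random variable by the discussion in Section~\ref{sec-prelim}, and the surface $(U_\ell,\Phi_\gamma|_{U_\ell})/{\sim}_{Q_\gamma}$ is well-defined since $\Phi_\gamma$ restricts to a distribution on the open set $U_\ell\subset\BB D$.

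Granting (i)--(iii), there is nothing further to do; the theorem is a verbatim restatement of \cite[Theorem~1.1]{msw-simple-cle-lqg}. The only mildly delicate step, and the one I would treat with the most care, is (iii): I would check that the $\sigma$-algebra generated by $\{\nu_{\Phi_\gamma}(\ell)\}_{\ell\in\Gamma_\gamma}$ is exactly the one with respect to which \cite{msw-simple-cle-lqg} asserts conditional independence, and that each surface $(U_\ell,\Phi_\gamma|_{U_\ell})/{\sim}_{Q_\gamma}$ is jointly measurable in $(\Phi_\gamma,\Gamma_\gamma)$---which follows from the locality of the construction of $\nu_{\Phi_\gamma}$ together with the measurability of a GFF-type field with respect to its induced measure \cite{bss-lqg-gff}. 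I expect no genuine obstacle here; the statement is recorded mainly because it is the base case for the $\gamma\uparrow 2$ limiting argument used to prove Theorems~\ref{thm-critical-cle4} and~\ref{thm-critical-lengths}.
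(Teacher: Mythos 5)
Your proposal matches the paper exactly: the paper offers no independent proof of Theorem~\ref{thm-subcritical-coupling} and simply records it as immediate from \cite[Theorem 1.1]{msw-simple-cle-lqg}, so your translation-of-conventions argument (identifying the quantum disk of Definition~\ref{def-lqg-disk} with the one in \cite{msw-simple-cle-lqg}, matching the non-nested CLE$_{\kappa}$ normalization, and checking that the loop lengths and enclosed surfaces are well-defined measurable objects) is precisely the content that the citation is implicitly relying on. No gap; nothing further is needed.
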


In Section~\ref{sec-details}, we will deduce Theorem~\ref{thm-critical-cle4} from Theorem~\ref{thm-subcritical-coupling} by taking a limit as $\gamma\to 2^-$. 
The key facts which allow us to do this are the convergence in law of the $\gamma$-LQG disk to the critical LQG disk as $\gamma\to 2^-$~\cite{ahps-critical-mating} (see Lemma~\ref{lem-disk-conv}); and the convergence in law of CLE$_\kappa$ to CLE$_4$ as $\kappa\to 4^-$, which follows from the Brownian loop soup description of CLE$_4$ (see Lemma~\ref{lem-cle-conv}). 

These two convergence statements do not immediately imply Theorem~\ref{thm-critical-cle4} for two reasons. First, it is not obvious that the $\gamma$-LQG lengths of the CLE$_{\gamma^2}$ loops converge to the 2-LQG lengths of the CLE$_4$ loops (although it is easy to see that the loop lengths, appropriately re-scaled, are tight, see Lemma~\ref{lem-length-tight}). 
Second, conditional distributions do not in general behave nicely under convergence in distribution. 

For the proof of Theorem~\ref{thm-critical-cle4}, we will assume that the embeddings of our $\gamma$-LQG disks for $\gamma \in (\sqrt{8/3},2]$ are chosen so that the origin corresponds to a point sampled uniformly from the $\gamma$-LQG area measure.
We also let $\{\ell_\gamma^j\}_{j\in\BB N}$ be the sequence of CLE$_{\gamma^2}$ loops, in decreasing order of their $\gamma$-LQG lengths. We let $J_\gamma$ be the index of the origin-surrounding loop. 

Fix $k\in\BB N$ and let $\mcl S_2^k$ denote the (critical) LQG surface parameterized by the region enclosed by $k$th CLE$_4$ loop $\ell_2^k$. We need to show that the conditional law of $\mcl S_2^k$ given the 2-LQG length $\nu_2(\ell_2^k)$ and the LQG surfaces $\{\mcl S_2^j\}_{j \in \BB N\setminus \{k\}}$ is that of a critical LQG disk with the given boundary length.

The main idea in our proof is to study the conditional law of $\mcl S_2^k$ given $\nu_2(\ell_2^k)$, $\{\mcl S_2^j\}_{j \in \BB N\setminus \{k\}}$, and the event $\{J_2 = k\}$ that $\ell_2^k$ is the origin-surrounding loop. 
We will give two different descriptions of this conditional law. The first comes from Theorem~\ref{thm-subcritical-coupling}, \cite[Proposition 5.4]{acsw-loop}, and a limiting argument (Lemma~\ref{lem-weighting}), and is in terms of a Radon-Nikodym derivative with respect to the law of a critical LQG disk. 
The second comes from the fact that the origin is a point sampled from the LQG area measure (Lemma~\ref{lem-cond-weight}) and is in terms of a Radon-Nikodym derivative with respect to the conditional law of $\mcl S_2^k$ given only $\nu_2(\ell_2^k)$ and $\{\mcl S_2^j\}_{j \in \BB N\setminus \{k\}}$ (i.e., not conditioning on $\{J_2 = k\}$). Equating these two descriptions will show that the conditional law of $\mcl S_2^k$ given $\nu_2(\ell_2^k)$ and $\{\mcl S_2^j\}_{j \in \BB N\setminus \{k\}}$ is that of a critical LQG disk with the given boundary length, as required. This holds for any $k$, which concludes the proof of Theorem~\ref{thm-critical-cle4}.  

The proof of Theorem~\ref{thm-critical-lengths}, given in Section~\ref{sec-length-proof}, is also based on a limiting argument started from the analogous statement in the subcritical case, see Theorem~\ref{thm-subcritical-lengths} below. 
To deduce Theorem~\ref{thm-critical-lengths} from this statement, our main task is to show that as $\gamma\to 2^-$, the joint law of the $\gamma$-LQG lengths of the CLE$_{\gamma^2}$-loops $\{\ell_\gamma^j\}_{j\in\BB N}$ converges to the analogous joint law for $\gamma=2$ (Proposition~\ref{prop-ordered-loop-conv}). 

To do this, for each $\gamma \in (\sqrt{8/3},2]$ we will consider countably many marked points $\{Z_\gamma(m)\}_{m\in\BB N}$ sampled from the $\gamma$-LQG area measure. We define $J_\gamma(m)$ to be the index of the CLE$_{\gamma^2}$ loop which surrounds $Z_\gamma(m)$. The results of Section~\ref{sec-details} (in particular, Lemma~\ref{lem-weighting}) allow us to show that as $\gamma\to 2^-$, the joint law of the $\gamma$-LQG lengths of the loops $\ell_\gamma^{J_\gamma(m)}$ converge to the analogous joint law for $\gamma=2$ (Lemma~\ref{lem-infinite-marked-pts}). The convergence of the joint law of the LQG lengths of $\{\ell_\gamma^j\}_{j\in\BB N}$ will follow from this together with some tightness statements (Lemma~\ref{lem-index-tight} and~\ref{lem-inverse-tight}).

\subsection{Proof of Theorem~\ref{thm-critical-cle4}}
\label{sec-details}

For $\gamma \in (\sqrt{8/3} , 2)$, let
\eqb \label{eqn-rescaled-measures}
\ep = \ep_\gamma := 2(2-\gamma) ;
\eqe   
c.f.\ Footnote~\ref{footnote-ep}.
Also let $\Phi_\gamma$ be an embedding into $\BB D$ of a $\gamma$-LQG disk with boundary length $\ep$, chosen so that 0 is a point sampled from the $\gamma$-LQG area measure and 1 is a point sampled from the $\gamma$-LQG length measure. More precisely, let $\Phi_\gamma^0$ be an arbitrary embedding into $\BB D$ of a $\gamma$-LQG disk of unit boundary length (obtained, e.g., by starting with an embedding of the LQG disk in $\mcl S$ as in Definition~\ref{def-lqg-disk}, conformally mapping $\mcl S$ to $\BB D$, and changing coordinates via~\eqref{eqn-lqg-coord}).
Conditional on $\Phi_\gamma^0$, let $Z \in \BB D$ and $W \in \bdy\BB D$ be sampled from its associated $\gamma$-LQG area measure and $\gamma$-LQG length measure, respectively, and take $Z$ and $W$ to be conditionally independent given $\Phi_\gamma^0$. Let $f : \BB D \to \BB D$ be the conformal automorphism with $f(0) = Z$ and $f(1) = W$, and let 
\eqbn
\Phi_\gamma :=  \Phi_\gamma^0 \circ f  + Q_\gamma \log|f'|  + \frac{2}{\gamma} \log \ep ,\quad \text{where} \quad  Q_\gamma := 2/\gamma + \gamma/2 .
\eqen 
For $\gamma  = 2$, instead let $\Phi_2$ be the embedding into $\BB D$ of a critical LQG disk of unit boundary length (instead of boundary length $\ep$), defined in the same manner as $\Phi_\gamma$ above.

For $\gamma\in (\sqrt{8/3},2]$, we write $\nu_\gamma$ and $\mu_\gamma$ for the $\gamma$-LQG length measure and $\gamma$-LQG area measure associated with $\Phi_\gamma$.  
Also let $\Gamma_\gamma$ be a CLE$_{\kappa=\gamma^2}$ on $\BB D$, sampled independently from $\Phi_\gamma$.  

\begin{remark}\label{remark-moment}
	Even though the origin is a point sampled from the $\gamma$-LQG area measure, we do \emph{not} weight the law of our LQG disks by the total mass of the $\gamma$-LQG area measure (i.e., we are not looking at quantum disks with a marked interior point in the sense of, e.g., \cite{wedges}). One reason why this is important is that in the critical case we have $\BB E[\mu_2(\BB D)] =\infty$, so weighting by the total area for $\gamma=2$ does not make sense. Indeed, for $\gamma < 2$ the law of $\mu_\gamma(\BB D)$ is the inverse gamma distribution with shape $\frac4{\gamma^2}$ and scale $\eps^2/(4 \sin (\frac{\pi\gamma^2}4))$ by \cite[Theorem 1.2]{ag-disk} and \cite[Theorem 1.3]{ars-fzz}, so by Lemma~\ref{lem-disk-conv} below the law of $\mu_2(\BB D)$ is the inverse gamma distribution with shape $1$ and scale $(2\pi)^{-1}$. Thus $\BB E[\mu_\gamma(\BB D)] < \infty$ for $\gamma <2$ and $\BB E[\mu_2(\BB D)] = \infty$.
\end{remark}

A crucial input in our proof is the following convergence statement.

\begin{lem}[{\cite{ahps-critical-mating}}] \label{lem-disk-conv}
	With $\ep$ as in~\eqref{eqn-rescaled-measures}, we have the joint convergence in law $(\Phi_\gamma , \ep^{-1} \mu_\gamma , \ep^{-1} \nu_\gamma) \to (\Phi_2 , \mu_2 , \nu_2)$ as $\gamma \to 2^-$, where the first coordinate is given the topology of the Sobolev space $\mcl H^{-1}(\BB D)$ and the second two coordinates are given the topology of weak convergence of measures on $\BB D$ and $\bdy\BB D$, respectively.
\end{lem}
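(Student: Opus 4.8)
The plan is to deduce the lemma from \cite[Lemma 4.5]{ahps-critical-mating}, which already provides a joint convergence of this type --- a $\gamma$-LQG disk together with its rescaled area and boundary length measures converging in law to the critical LQG disk and its area and boundary length measures --- but for a fixed reference embedding of the disk into $\BB D$ rather than for the embedding in which $0$ is a point sampled from the area measure and $1$ a point sampled from the boundary length measure. Thus the only thing to supply is a re-rooting step: one must check that this change of embedding is continuous along the convergence. (If \cite[Lemma 4.5]{ahps-critical-mating} is already stated in the embedding used here, the lemma is an immediate restatement and nothing more is needed. The additive constant $\frac{2}{\gamma}\log\ep$ appearing in the definition of $\Phi_\gamma$ is harmless --- it is zero when $\gamma = 2$, and for $\gamma < 2$ it simply rescales $\nu_\gamma$ by $\ep$ and $\mu_\gamma$ by $\ep^2$, consistently with the factors of $\ep^{-1}$ in the statement --- so I will suppress it below and regard $\Phi_\gamma^{\mathrm{ref}}$ as already incorporating it.)

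Write $\Phi_\gamma^{\mathrm{ref}}$ for the reference embedding, so that $(\Phi_\gamma^{\mathrm{ref}}, \ep^{-1}\mu_{\Phi_\gamma^{\mathrm{ref}}}, \ep^{-1}\nu_{\Phi_\gamma^{\mathrm{ref}}})$ converges in law to $(\Phi_2^{\mathrm{ref}}, \mu_2, \nu_2)$ in the stated topologies, with total masses converging (so that no area mass escapes to $\bdy\BB D$ in the limit). By the Skorokhod representation theorem I would couple so that this convergence is almost sure. Conditionally on the realization, sample $Z_\gamma \in \BB D$ from $\ep^{-1}\mu_{\Phi_\gamma^{\mathrm{ref}}}$ normalized to a probability measure, and $W_\gamma \in \bdy\BB D$ from $\ep^{-1}\nu_{\Phi_\gamma^{\mathrm{ref}}}$ normalized, independently. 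Because the normalized measures converge weakly almost surely (using convergence of the total masses), $(Z_\gamma, W_\gamma) \to (Z_2, W_2)$ jointly with the triples, where $Z_2 \sim \mu_2/\mu_2(\BB D)$ and $W_2 \sim \nu_2$; note $Z_2 \in \BB D$ almost surely since $\mu_2$ puts no mass on $\bdy\BB D$. Let $f_\gamma : \BB D \to \BB D$ be the conformal automorphism with $f_\gamma(0) = Z_\gamma$ and $f_\gamma(1) = W_\gamma$; it exists, is unique, depends continuously on $(Z_\gamma, W_\gamma) \in \BB D \times \bdy\BB D$, and $\log|f_\gamma'|$ extends smoothly and boundedly to $\ol{\BB D}$, so $f_\gamma \to f_2$ and $f_\gamma^{-1} \to f_2^{-1}$ in $C^\infty(\ol{\BB D})$.

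Finally, apply the coordinate change. The field $\Phi_\gamma = \Phi_\gamma^{\mathrm{ref}} \circ f_\gamma + Q_\gamma \log|f_\gamma'|$ is the image of $\Phi_\gamma^{\mathrm{ref}}$ under a coordinate change by the disk automorphism $f_\gamma$; since precomposition by a $C^\infty(\ol{\BB D})$-convergent sequence of disk automorphisms and addition of the uniformly convergent functions $Q_\gamma \log|f_\gamma'|$ are both continuous on $\mcl H^{-1}(\BB D)$, and $Q_\gamma \to 2$, we get $\Phi_\gamma \to \Phi_2$ in $\mcl H^{-1}(\BB D)$. By the coordinate-change formula for the LQG measures, $\ep^{-1}\mu_\gamma = (f_\gamma^{-1})_*(\ep^{-1}\mu_{\Phi_\gamma^{\mathrm{ref}}})$ and $\ep^{-1}\nu_\gamma = (f_\gamma^{-1})_*(\ep^{-1}\nu_{\Phi_\gamma^{\mathrm{ref}}})$, and pushforward of finite measures under the uniformly convergent homeomorphisms $f_\gamma^{-1}$ of $\ol{\BB D}$ is weakly continuous, so $\ep^{-1}\mu_\gamma \to \mu_2$ and $\ep^{-1}\nu_\gamma \to \nu_2$. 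Since the limit does not depend on the coupling, this yields convergence in law. The only non-formal ingredient, and what I expect to be the main obstacle, is establishing that the total masses converge and that no area mass escapes to $\bdy\BB D$ under the convergence of \cite[Lemma 4.5]{ahps-critical-mating} --- this is what makes ``sample a point and re-embed'' almost surely continuous, and it should either be part of the precise statement there or follow from standard estimates on (critical) Gaussian multiplicative chaos on the disk.
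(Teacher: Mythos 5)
Your proposal is correct and follows essentially the same route as the paper: the paper simply cites \cite[Lemma 4.5]{ahps-critical-mating} (together with Remark 4.9 there, and a note that those authors build the factor $\ep^{-1}$ into their definition of the measures) and treats the statement as immediate. Your additional re-rooting step (sampling the marked points from the converging normalized measures and changing coordinates by the resulting automorphisms) is a sound way to bridge a possible difference of embedding, and the total-mass/no-mass-on-the-boundary point you flag is indeed the only ingredient needed for it, which is supplied by the cited results.
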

\begin{proof}
	This is immediate from \cite[Lemma 4.5]{ahps-critical-mating} (see also~\cite[Remark 4.9]{ahps-critical-mating}). 
	Note that in \cite{ahps-critical-mating}, the authors re-define the LQG area and LQG boundary length measures to be $\ep^{-1}$ times the definitions used here, whereas we do not do this re-definition here. 
\end{proof}

For $\gamma\in (\sqrt{8/3} ,2]$, we write $\Gamma_\gamma = \{\ell_\gamma^{j}\}_{j\in\BB N}$, where the loops are enumerated in decreasing order of their $\nu_\gamma$-lengths, i.e., 
\eqb \label{eqn-ordered-loops}
\nu_\gamma(\ell_\gamma^1) > \nu_\gamma(\ell_\gamma^2)  > \dots .
\eqe 
We abbreviate
\eqbn
U_\gamma^j = U_{\ell_\gamma^j} := \left(\text{Open region enclosed by $\ell_\gamma^j$} \right)  
\eqen
and we define the $\gamma$-LQG surface
\eqb  \label{eqn-loop-surface}
\mcl S_\gamma^j := \left(U_\gamma^j , \Phi_\gamma|_{U_\gamma^j} \right) / {\sim}_{Q_\gamma} .
\eqe 

Let $J_\gamma \in \BB N$ be chosen so that
\eqb \label{eqn-marked-loop-def}
0 \in U_\gamma^{J_\gamma} .
\eqe 
Since our embedding is chosen so that 0 corresponds to a point sampled from the probability measure $\mu_\gamma / \mu_\gamma(\BB D)$, we see that the conditional law of $J_\gamma$ given the LQG surfaces $\{\mcl S_\gamma^j \}_{j\in\BB N}$ is 
\eqb \label{eqn-marked-loop-law} 
\BB P\left[ J_\gamma = k \,|\, \{\mcl S_\gamma^j \}_{j\in\BB N} \right] 
= \frac{\mu_\gamma(U_\gamma^k)}{\mu_\gamma(\BB D)}  .
\eqe 
We need the following tightness result.

\begin{lem} \label{lem-length-tight}
	The origin-surrounding loop lengths $\{\ep^{-1} \nu_\gamma(\ell_\gamma^{J_\gamma}) \}_{\gamma \in (\sqrt{8/3},2)}$ are tight.
\end{lem}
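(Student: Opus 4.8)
The plan is to bound $\BB P[\ep^{-1}\nu_\gamma(\ell_\gamma^{J_\gamma}) > M]$ uniformly in $\gamma \in (\sqrt{8/3},2)$ by controlling the rescaled $\nu_\gamma$-lengths of \emph{all} the loops of $\Gamma_\gamma$ at once, via a uniform second-moment estimate. Specifically, I would show
\[
\BB E\Big[ \sum_{j\in\BB N} \big(\ep^{-1}\nu_\gamma(\ell_\gamma^j)\big)^2 \Big] \leq 1 \qquad \text{for all } \gamma\in(\sqrt{8/3},2).
\]
Granting this, since $\ep^{-2}\nu_\gamma(\ell_\gamma^{J_\gamma})^2 \leq \sum_{j\in\BB N} \ep^{-2}\nu_\gamma(\ell_\gamma^j)^2$, Markov's inequality immediately gives $\BB P\big[\ep^{-1}\nu_\gamma(\ell_\gamma^{J_\gamma}) > M\big] \leq M^{-2}$ for every $M>0$ and every $\gamma$, which is the desired tightness. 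Note that this argument does not use Lemma~\ref{lem-disk-conv} or the convergence of CLE$_\kappa$ to CLE$_4$, and since the bound holds for each $\gamma\in(\sqrt{8/3},2)$ separately there is no subtlety near either endpoint of the interval.

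To prove the displayed bound I would compare the total $\gamma$-LQG area of the disk with the $\gamma$-LQG areas enclosed by the loops. Write $A_\gamma^j := \mu_\gamma(U_\gamma^j)$; since the $U_\gamma^j$ are pairwise disjoint, $\sum_{j} A_\gamma^j \leq \mu_\gamma(\BB D)$. Let $a_\gamma$ denote the expected total $\gamma$-LQG area of the $\gamma$-LQG disk of unit boundary length; this is finite and positive for $\gamma\in(\sqrt{8/3},2)$ — it is precisely the finiteness that fails at $\gamma=2$, cf.\ the remark above. Because rescaling a $\gamma$-LQG disk to boundary length $\ep$ multiplies its $\gamma$-LQG area by $\ep^2$, we get $\BB E[\mu_\gamma(\BB D)] = \ep^2 a_\gamma$. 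On the other hand, by Theorem~\ref{thm-subcritical-coupling}, conditionally on the lengths $\{\nu_\gamma(\ell_\gamma^i)\}_{i\in\BB N}$ the surface $\mcl S_\gamma^j$ is a $\gamma$-LQG disk of boundary length $\nu_\gamma(\ell_\gamma^j)$; since $A_\gamma^j$ equals the total $\gamma$-LQG area of $\mcl S_\gamma^j$ (by locality and conformal invariance of the LQG area measure, Section~\ref{sec-prelim}), the same scaling yields $\BB E\big[A_\gamma^j \,\big|\, \{\nu_\gamma(\ell_\gamma^i)\}_{i\in\BB N}\big] = a_\gamma\, \nu_\gamma(\ell_\gamma^j)^2$. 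Summing over $j$ and taking expectations (all terms nonnegative, so Tonelli applies),
\[
a_\gamma\, \BB E\Big[\sum_{j} \nu_\gamma(\ell_\gamma^j)^2\Big] = \BB E\Big[\sum_{j} A_\gamma^j\Big] \leq \BB E[\mu_\gamma(\BB D)] = \ep^2 a_\gamma .
\]
Dividing through by $a_\gamma\in(0,\infty)$ and then by $\ep^2$ gives the displayed bound.

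The only mildly delicate points I anticipate are confirming $a_\gamma<\infty$ for $\gamma\in(\sqrt{8/3},2)$ (this is standard; e.g.\ the total $\gamma$-LQG area of a unit boundary length $\gamma$-LQG disk has finite moments of all orders less than $4/\gamma^2>1$, which is $>1$ exactly when $\gamma<2$) and getting the scaling exponents right in the identity $\BB E[A_\gamma^j \mid \{\nu_\gamma(\ell_\gamma^i)\}_i] = a_\gamma\,\nu_\gamma(\ell_\gamma^j)^2$. Neither of these is a real obstacle, so the lemma really is "easy to see" as advertised.
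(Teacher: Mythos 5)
Your argument is correct, but it takes a genuinely different route from the paper. The paper first reduces to the largest loop $\ell_\gamma^1$ (since $\nu_\gamma(\ell_\gamma^{J_\gamma})\leq\nu_\gamma(\ell_\gamma^1)$) and then uses Lemma~\ref{lem-disk-conv} twice: the rescaled area of a boundary-length-$\ep$ disk converges in law to an a.s.\ positive limit, which gives a uniform-in-$\gamma$ constant $C_p$ with $\BB P[\ep\,\mu_\gamma(U_\gamma^1)\geq C_p\,\nu_\gamma(\ell_\gamma^1)^2]\geq p$, and the tightness of $\ep^{-1}\mu_\gamma(\BB D)$; combining the two yields tightness of $\ep^{-1}\nu_\gamma(\ell_\gamma^1)$. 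You instead work in expectation: Theorem~\ref{thm-subcritical-coupling} plus the exact scaling of quantum area under a boundary-length rescaling (adding $\frac2\gamma\log L$ to the field multiplies areas by $L^2$) give $\BB E[\mu_\gamma(U_\gamma^j)\mid\{\nu_\gamma(\ell_\gamma^i)\}_i]=a_\gamma\,\nu_\gamma(\ell_\gamma^j)^2$ with the \emph{same} constant $a_\gamma=\BB E[\mu_\gamma\text{-area of a unit-boundary disk}]$ that appears in $\BB E[\mu_\gamma(\BB D)]=\ep^2 a_\gamma$, so $a_\gamma$ cancels and Tonelli yields $\BB E\big[\sum_j(\ep^{-1}\nu_\gamma(\ell_\gamma^j))^2\big]\leq 1$, whence an explicit uniform tail bound $M^{-2}$ by Markov. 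The only extra input is $a_\gamma\in(0,\infty)$ for $\gamma\in(\sqrt{8/3},2)$, which the paper itself invokes (proof of Lemma~\ref{lem-weighting-subcritical}, ``$\BB E[\mu_\gamma(\BB D)]$ is finite for $\gamma<2$''); this is exactly the step that breaks at $\gamma=2$, where it is not needed. What each approach buys: yours is quantitative and self-contained, gives a uniform polynomial tail for \emph{all} rescaled loop lengths simultaneously, and avoids both Lemma~\ref{lem-disk-conv} and the CLE convergence; the paper's avoids any moment hypotheses on the quantum area and uses only distributional convergence facts it needs elsewhere anyway. In both arguments one should note (as the paper does implicitly) that Theorem~\ref{thm-subcritical-coupling}, stated for unit boundary length, transfers to the boundary-length-$\ep$ disk because the rescaling is just an additive constant in the field and the CLE is independent of the field.
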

\begin{proof}
	Since $\nu_\gamma(\ell_\gamma^{J_\gamma}) \leq \nu_\gamma(\ell_\gamma^1)$ by definition, it suffices to show that $\ep^{-1} \nu_\gamma(\ell_\gamma^1)$ is tight.
	
	From Theorem~\ref{thm-subcritical-coupling}, we know that the conditional law of the LQG surface $\mcl S_\gamma^1$ given $\nu_\gamma(\ell_\gamma^1)$ is that of a $\gamma$-LQG disk with the given boundary length. Therefore, the conditional law given $\nu_\gamma(\ell_\gamma^1)$ of 
	\eqbn
	\frac{ \ep^2 }{[\nu_\gamma(\ell_\gamma^1)]^2}  \mu_\gamma(U_\gamma^1) 
	\eqen
	is the same as the law of the total $\gamma$-LQG area of a $\gamma$-LQG disk with boundary length $\ep$. By Lemma~\ref{lem-disk-conv}, this total area re-scaled by $\ep^{-1}$ converges in law to an a.s.\ positive random variable, so for every $p\in (0,1)$ there exists $C_p > 0$, depending only on $p$, such that for every $\gamma \in (\sqrt{8/3},2)$,  
	\eqb \label{eqn-big-area-pos0}
	\BB P\left[ \ep  \mu_\gamma(U_\gamma^1)  \geq  C_p  [\nu_\gamma(\ell_\gamma^1)]^2  \right] \geq p .
	\eqe 
	That is,
	\eqb \label{eqn-big-area-pos}
	\BB P\left[ \ep^{-1} \mu_\gamma(U_\gamma^1)  \geq  C_p \ep^{-2} [\nu_\gamma(\ell_\gamma^1)]^2   \right] \geq p .
	\eqe 
	We have $\ep^{-1}\mu_\gamma(U_\gamma^1) \leq \ep^{-1} \mu_\gamma (\BB D ) $, which is a tight random variable by Lemma~\ref{lem-disk-conv}. This combined with~\eqref{eqn-big-area-pos} shows that the random variable $\ep^{-2} [\nu_\gamma(\ell_\gamma^1)]^2 $, and hence also the random variable $\ep^{-1} \nu_\gamma(\ell_\gamma^1) $, is tight. 
\end{proof}

\begin{figure}[t]
	\begin{center}
		\includegraphics[scale = 0.75]{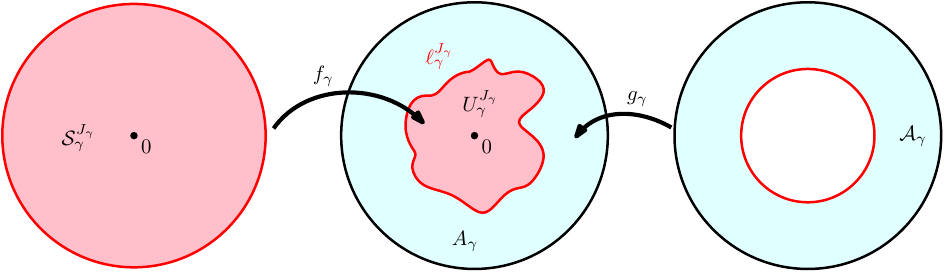}  
		\caption{\label{fig-in-out} Illustration of the domains and LQG surfaces defined in \eqref{eqn-marked-loop-def}, \eqref{eqn-outer-annulus}, and the surrounding discussion. The LQG surface $\mcl A_\gamma$ is additionally decorated by the CLE gasket $G_\gamma$. 
		}
	\end{center}
\end{figure}

Let
\eqbn
A_\gamma := \BB D\setminus \ol U_\gamma^{J_\gamma} 
\eqen
be the annular region outside of the origin-containing loop of $\Gamma_\gamma$. 
Also define the intersection of the CLE gasket with $A_\gamma$ by
\eqbn
G_\gamma :=  \ol{ \bigcup_{\ell\in \Gamma_\gamma } \ell } \cap A_\gamma  .
\eqen
We note that $\Gamma_\gamma$ is a.s.\ determined by $G_\gamma$ (the loops of $\Gamma_\gamma$ are the boundaries of the complementary connected components of $\ol G_\gamma$). We define the set-decorated LQG surface, with the annulus topology and the outer boundary distinguished, 
\eqb \label{eqn-outer-annulus}
\mcl A_\gamma := \left( A_\gamma , \bdy\BB D ,  \Phi_\gamma|_{A_\gamma} , G_\gamma \right) / {\sim}_{Q_\gamma} ; 
\eqe 
recall the discussion just after Definition~\ref{def-lqg-surface}. We have the following description of the conditional law of $\mcl S_\gamma^{J_\gamma}$ given $\mcl A_\gamma$, which we will prove using~\cite[Proposition 5.4]{acsw-loop}.

\begin{lem} \label{lem-weighting-subcritical}
	For each $\gamma \in (\sqrt{8/3},2)$, a.s.\ the conditional law of the origin-containing LQG surface $\mcl S_\gamma^{J_\gamma}$ given 
	$\mcl A_\gamma$ is equal to the law of a $\gamma$-LQG disk with boundary length $\nu_\gamma(\ell_\gamma^{J_\gamma})$ weighted by $\mu_\gamma(U_\gamma^{J_\gamma}) / \mu_\gamma(\BB D)$ (normalized to be a probability measure). 
\end{lem}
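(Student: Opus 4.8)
The plan is to deduce the statement from Theorem~\ref{thm-subcritical-coupling} together with the fact, recorded in~\eqref{eqn-marked-loop-law}, that the origin is a sample from $\mu_\gamma/\mu_\gamma(\BB D)$ — this is essentially the content of~\cite[Proposition~3.4]{as-cle-integrability}, so the shortest writeup just matches up notation and quotes that result, but I will sketch the mechanism. (The rescaling of the boundary length from $1$ to $\ep$ is harmless, amounting to adding the constant $\tfrac2\gamma\log\ep$ to the field; and the boundary marked point $1$ of $\Phi_\gamma$, being a length-measure sample used only to fix the embedding, plays no role here.) First, applying Theorem~\ref{thm-subcritical-coupling} to the unmarked $\gamma$-LQG disk underlying $\Phi_\gamma$ and using that $\Gamma_\gamma$ is independent of $\Phi_\gamma$, one upgrades it to the statement that, conditionally on $\Gamma_\gamma$ together with all the lengths $\{\nu_\gamma(\ell_\gamma^j)\}_{j\in\BB N}$, the surfaces $\{\mcl S_\gamma^j\}_{j\in\BB N}$ are independent $\gamma$-LQG disks with the given boundary lengths (one runs the Miller--Sheffield--Werner argument with $\Gamma_\gamma$ fixed).

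Next, fix $k\in\BB N$ and work conditionally on $\Gamma_\gamma$ and $\{\nu_\gamma(\ell_\gamma^j)\}_j$. The CLE$_{\gamma^2}$ gasket a.s.\ has Lebesgue measure zero (its Hausdorff dimension is $<2$ for $\gamma^2\in(8/3,4)$) and is independent of $\Phi_\gamma$, so it is $\mu_\gamma$-null; hence $\mu_\gamma(\BB D) = \sum_j \mu_\gamma(U_\gamma^j)$, the origin a.s.\ lies in some loop, and~\eqref{eqn-marked-loop-law} gives $\BB P[\,J_\gamma = k\mid\{\mcl S_\gamma^j\}_j\,] = \mu_\gamma(U_\gamma^k)/\mu_\gamma(\BB D)$. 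Now condition further on $\{\mcl S_\gamma^j\}_{j\neq k}$ and on the event $\{J_\gamma = k\}$, and set $a := \sum_{j\neq k}\mu_\gamma(U_\gamma^j)$, which on $\{J_\gamma = k\}$ equals $\mu_\gamma(A_\gamma)$. By the previous step $\mcl S_\gamma^k$ is, before imposing $\{J_\gamma = k\}$, a $\gamma$-LQG disk of boundary length $\nu_\gamma(\ell_\gamma^k)$ independent of the rest of the conditioning data; imposing $\{J_\gamma = k\}$ then reweights its law by $\mu_\gamma(U_\gamma^k)/(\mu_\gamma(U_\gamma^k)+a)$, by Bayes' rule with $a$ held fixed. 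Since $\mu_\gamma(U_\gamma^k)+a = \mu_\gamma(\BB D)$ and $\mu_\gamma(U_\gamma^k) = \mu_\gamma(U_\gamma^{J_\gamma})$ on $\{J_\gamma=k\}$, this shows that, conditionally on $(\Gamma_\gamma,\{\nu_\gamma(\ell_\gamma^j)\}_j,\{\mcl S_\gamma^j\}_{j\neq J_\gamma},\{J_\gamma = k\})$, the surface $\mcl S_\gamma^{J_\gamma}$ is a $\gamma$-LQG disk of boundary length $\nu_\gamma(\ell_\gamma^{J_\gamma})$ reweighted by $\mu_\gamma(U_\gamma^{J_\gamma})/\mu_\gamma(\BB D)$, exactly as in the statement.

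Finally, I would check that this is also the conditional law given $\mcl A_\gamma$. On one hand, the reweighting just found depends on the conditioning data only through the $\mcl A_\gamma$-measurable quantities $\nu_\gamma(\ell_\gamma^{J_\gamma})$ and $\mu_\gamma(A_\gamma)$; on the other hand, $\mcl A_\gamma$ is measurable with respect to $(\Gamma_\gamma,\{\nu_\gamma(\ell_\gamma^j)\}_j,\{\mcl S_\gamma^j\}_{j\neq J_\gamma})$ — heuristically, $\mcl A_\gamma$ is recovered by welding the surfaces $\{\mcl S_\gamma^j\}_{j\neq J_\gamma}$ along their LQG boundary lengths according to the combinatorial pattern of $\Gamma_\gamma$, leaving a free inner boundary of length $\nu_\gamma(\ell_\gamma^{J_\gamma})$, with the field near the ($\mu_\gamma$-null) gasket determined by the area measure via~\cite{bss-lqg-gff}. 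The tower property then yields the claim. The main obstacle is precisely this last point: making the two-way measurability rigorous, since $\mcl A_\gamma$ is an equivalence class of set-decorated surfaces modulo conformal maps of the annulus and this reconstruction is essentially a conformal-welding statement for CLE-decorated LQG. Because all of this is carried out in~\cite[Proposition~3.4]{as-cle-integrability}, in the actual writeup I would simply invoke that proposition after the notational translation.
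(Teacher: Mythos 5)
Your fallback plan---quoting \cite[Proposition 3.4]{as-cle-integrability} after a notational translation---is in fact the paper's proof, but the translation is not purely notational: that proposition is stated under the area-weighted law $\BB P_\gamma^* = \frac{\mu_\gamma(\BB D)}{\BB E[\mu_\gamma(\BB D)]}\BB P$ (i.e.\ for the disk with an interior marked point), and under $\BB P_\gamma^*$ the conditional law of $\mcl S_\gamma^{J_\gamma}$ given $\mcl A_\gamma$ is a quantum disk weighted by $\mu_\gamma(U_\gamma^{J_\gamma})$ alone. The factor $\mu_\gamma(U_\gamma^{J_\gamma})/\mu_\gamma(\BB D)$ in the lemma comes precisely from undoing this re-weighting by $\mu_\gamma(\BB D)$, a step your sketch does not mention (your parenthetical only addresses the boundary-length rescaling and the boundary marked point).

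Your self-contained mechanism has a genuine gap at its first step. Theorem~\ref{thm-subcritical-coupling} does not upgrade to the claim that, conditionally on the \emph{embedded} ensemble $\Gamma_\gamma$ together with the lengths, the interior surfaces are independent quantum disks. Independence of $\Gamma_\gamma$ and $\Phi_\gamma$ only says the conditional law of the field given $\Gamma_\gamma$ is unchanged; your upgraded claim is a quenched statement, asserting the disk property for CLE-a.e.\ \emph{fixed} loop configuration, which is not what \cite{msw-simple-cle-lqg} proves (their argument uses the randomness of the CLE itself, and for a deterministic loop the surface it encloses, given its quantum boundary length, is not a quantum disk, nor are the surfaces inside disjoint deterministic loops conditionally independent given their lengths). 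The conditioning in both \cite{msw-simple-cle-lqg} and \cite[Proposition 3.4]{as-cle-integrability} is on the complementary region viewed as a decorated \emph{quantum surface} (the gasket, resp.\ $\mcl A_\gamma$), not on the embedded CLE, and ``running the argument with $\Gamma_\gamma$ fixed'' is not available. Relatedly, your closing measurability claim that $\mcl A_\gamma$ is determined by $(\Gamma_\gamma,\{\nu_\gamma(\ell_\gamma^j)\}_j,\{\mcl S_\gamma^j\}_{j\neq J_\gamma})$ fails as stated: knowing the abstract surface $\mcl S_\gamma^j$ together with the domain $U_\gamma^j$ pins down the embedded field only up to the three-parameter group of conformal automorphisms of $U_\gamma^j$, so the area measure on $A_\gamma$ (and hence, via \cite{bss-lqg-gff}, the field there) is not recovered; this is exactly the obstacle you flag, and it is why the paper does not argue this way but instead applies \cite[Proposition 3.4]{as-cle-integrability} under $\BB P_\gamma^*$ and then unweights.
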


We note that $\nu_\gamma(\ell_\gamma^{J_\gamma})$ is a.s.\ determined by $\mcl A_\gamma$, since it is the same as the LQG length of the inner (i.e., non-outer) boundary of $\mcl A_\gamma$, and LQG lengths are intrinsic to LQG surfaces (do not depend on the choice of embedding) as explained at the end of Section~\ref{sec-lqg-surfaces}. Furthermore, the total LQG mass $\mu_\gamma(U_\gamma^{J_\gamma})$ is determined by $\mcl S_\gamma^{J_\gamma}$, and the total LQG area $\mu_\gamma(\BB D) = \mu_\gamma(U_\gamma^{J_\gamma}) + \mu_\gamma(A_\gamma)$ is determined by $\mcl S_\gamma^{J_\gamma}$ and $\mcl A_\gamma$. Hence the conditional law in the lemma statement makes sense.

\begin{proof}[Proof of Lemma~\ref{lem-weighting-subcritical}]
	Define the re-weighted probability measure
	\eqbn
	\BB P^*_\gamma := \frac{\mu_\gamma(\BB D)}{\BB E[\mu_\gamma(\BB D)]} \BB P .
	\eqen
	Note that this makes sense since $\BB E[\mu_\gamma(\BB D)]$ is finite for $\gamma  < 2$, see Remark~\ref{remark-moment}. 
	
	The definition of unmarked $\gamma$-LQG disk with boundary length $\eps$ in \cite{acsw-loop} agrees with that of  Definition~\ref{def-lqg-disk} (see discussion after Definition~\ref{def-lqg-disk}.) The \emph{$\gamma$-LQG disk with a marked interior point and boundary length $\eps$} in \cite{acsw-loop} arises from the unmarked variant via re-weighting by $\mu_\gamma(\BB D)/\BB E[\mu_\gamma(\BB D)]$ and  sampling a point from $\mu_\gamma/\mu_\gamma(\BB D)$, hence its law is precisely $\BB P_\gamma^*$. Thus, by~\cite[Proposition 5.4]{acsw-loop} (which is itself a consequence of \cite[Theorem 1.1]{msw-simple-cle-lqg}), we have the following analog of the lemma statement under $\BB P_\gamma^*$. 
	For each $\gamma \in (\sqrt{8/3},2)$, the $\BB P_\gamma^*$-conditional law of $\mcl S_\gamma^{J_\gamma}$ given 
	$\mcl A_\gamma$ is equal to the law of a $\gamma$-LQG disk with boundary length $\nu_\gamma(\ell_\gamma^{J_\gamma})$ weighted by $\mu_\gamma(U_\gamma^{J_\gamma}) $ (normalized to be a probability measure). Weighting by $\mu_\gamma(\BB D)^{-1}$, we conclude that the above statement holds when $\BB P_\gamma^*$ is replaced by $\BB P$ and $\mu_\gamma(U_\gamma^{J_\gamma})$ is replaced by $\mu_\gamma(U_\gamma^{J_\gamma})/\mu_\gamma (\BB D)$, as needed. 
\end{proof}

To complement Lemma~\ref{lem-disk-conv}, we have the following CLE convergence statement.

\begin{lem} \label{lem-cle-conv}
	As $\gamma\to 2^-$, the origin-surroundings loops and the CLE gaskets satisfy $(\ell_\gamma^{J_\gamma} , \ol G_\gamma) \to (\ell_2^{J_2}, \ol G_2)$ in distribution with respect to the Hausdorff distance on each coordinate.
\end{lem}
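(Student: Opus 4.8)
The plan is to prove Lemma~\ref{lem-cle-conv} in two steps: first establish convergence of the whole (non-nested) CLE$_\kappa$ as $\kappa \to 4^-$, and then upgrade this to the specific joint convergence of the gasket restricted to the annular region and the origin-surrounding loop, while handling the issue that the random embedding of the LQG disk is coupled to $\Gamma_\gamma$ only through the independent sample point at the origin. Since $\Gamma_\gamma$ is sampled independently from $\Phi_\gamma$, and the embedding $\Phi_\gamma$ is obtained from a fixed-embedding disk $\Phi_\gamma^0$ by applying the conformal automorphism $f$ sending $0 \mapsto Z$, $1 \mapsto W$, one should think of $\Gamma_\gamma$ as a CLE$_{\gamma^2}$ pushed forward under $f^{-1}$; by conformal invariance of CLE, this is again a CLE$_{\gamma^2}$, now sampled with the interior point $0$ playing the role of $Z$. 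The cleanest way to organize this is to work with the pair $(\Gamma_\gamma, z_\gamma)$ where $z_\gamma$ is a marked interior point, noting that the conformal automorphism group of $\BB D$ fixing the boundary sample structure is low-dimensional, so tightness of $\Phi_\gamma$ (Lemma~\ref{lem-disk-conv}) gives tightness of the embedding maps.

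First I would record the basic convergence CLE$_\kappa \to$ CLE$_4$ as $\kappa \to 4^-$. This follows from the Brownian loop soup description of~\cite{shef-werner-cle}: CLE$_\kappa$ for $\kappa \in (8/3,4]$ is the collection of outer boundaries of outermost clusters of a Brownian loop soup of intensity $\cc(\kappa)/2$, and $\cc(\kappa) \to \cc(4) = 1$ continuously as $\kappa \to 4$. One couples all the loop soups on a common probability space by using a single Poisson point process on the space of Brownian loops weighted by the largest intensity and thinning; as the intensity increases to $1/2$, the added loops converge and the cluster structure is stable away from the critical threshold in an appropriate sense. The upshot is convergence in law of $\Gamma_\kappa$ to $\Gamma_4$ with respect to the natural topology on closed sets of loops (Hausdorff distance on each loop plus, say, a metric that also controls how many loops have diameter above a threshold). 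This is a standard fact; references such as the loop soup literature and \cite{msw-simple-cle-lqg} can be cited, and indeed the paper signals in the outline that this ``follows from the Brownian loop soup description of CLE$_4$.''

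Next I would combine this with the convergence of the embedded field. By Lemma~\ref{lem-disk-conv} and the construction of $\Phi_\gamma$, the triple $(\Phi_\gamma, \mu_\gamma/\text{mass}, \nu_\gamma/\text{mass})$ converges in law, hence (passing to a subsequence and invoking Skorokhod) we may assume a.s.\ convergence; together with the independent CLE convergence this gives a.s.\ joint convergence of $(\Phi_\gamma, \Gamma_\gamma)$, and in particular a.s.\ convergence of the origin $0$ (which is deterministic in our chosen embedding) together with the loop configuration. The origin-surrounding loop $\ell_\gamma^{J_\gamma}$ is a measurable functional of $(\Gamma_\gamma, 0)$: it is the loop of $\Gamma_\gamma$ whose enclosed region contains $0$. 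The key analytic point is that this functional is a.s.\ continuous at the limit configuration $(\Gamma_4, 0)$: one needs that a.s.\ the origin does not lie on the gasket $\ol G_4$ of $\Gamma_4$ (true since the gasket has zero Lebesgue measure and $0$ is a fixed point — but one must be slightly careful, as $0$ is a \emph{fixed} point and the gasket is random, so this requires that for each fixed $z$, a.s.\ $z \notin \ol G_4$, which holds because CLE$_4$ gasket has a.s.\ zero Lebesgue measure and by Fubini), and moreover that the loop surrounding $0$ is a.s.\ ``non-degenerate'' in the sense that small perturbations of nearby loops cannot suddenly engulf the origin. This continuity, plus the convergence $\Gamma_\gamma \to \Gamma_4$, yields $\ell_\gamma^{J_\gamma} \to \ell_2^{J_2}$ in Hausdorff distance. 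Since $A_\gamma = \BB D \setminus \ol U_\gamma^{J_\gamma}$ and $G_\gamma = \ol{\bigcup_{\ell \in \Gamma_\gamma} \ell} \cap A_\gamma$, the convergence of $\ol G_\gamma$ follows from the convergence of the full gasket together with the convergence of $\ell_\gamma^{J_\gamma}$ and the continuity of the map ``intersect with the region outside $\ell_\gamma^{J_\gamma}$'' (using that the full gasket does not have positive-measure pieces accumulating on $\ell_2^{J_2}$).

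The main obstacle I anticipate is precisely this continuity/non-degeneracy of the origin-surrounding-loop functional at the limit, i.e., ruling out that in the limit $\kappa \to 4$ the origin sits on (or is ``just barely'' inside or outside) a loop of $\Gamma_4$, or that the nesting structure degenerates. The relevant facts — that a fixed interior point is a.s.\ not on the CLE$_4$ gasket, that it is a.s.\ surrounded by exactly one outermost loop of positive distance from the point, and that this loop depends continuously (in Hausdorff distance) on the configuration in a full-measure set — should follow from known regularity properties of CLE$_4$ (e.g.\ from the fact that CLE$_4$ loops are a.s.\ Jordan curves, the gasket has Hausdorff dimension $< 2$, and local finiteness of loops surrounding a fixed point). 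A secondary technical nuisance is bookkeeping the random conformal embedding: one should verify that the maps $f = f_\gamma$ are tight (they are determined by $Z_\gamma \in \BB D$ and $W_\gamma \in \bdy \BB D$, which live in compact sets) and that any subsequential limit $f_2$ is a conformal automorphism of $\BB D$ a.s.\ not degenerating to a constant, so that the pushed-forward CLE converges to an honest CLE$_4$; this uses that $Z_\gamma \to Z_2$ with $Z_2$ a.s.\ in the open disk (since it is sampled from the critical LQG area measure, which a.s.\ does not charge $\bdy \BB D$).
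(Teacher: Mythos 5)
Your overall route is the same one the paper takes: the paper's proof is a one-line citation of the Brownian loop soup description of CLE \cite{shef-werner-cle}, with the details carried out in \cite[Appendix A]{as-cle-integrability}, and that argument is exactly a monotone loop-soup coupling of the kind you describe in your first step. One remark before the main point: the bookkeeping of the embedding maps $f_\gamma$, $Z_\gamma$, $W_\gamma$ is unnecessary. In the paper's setup $\Gamma_\gamma$ is sampled in $\BB D$ independently of the already re-embedded field $\Phi_\gamma$, so the lemma is a statement purely about the CLE and the deterministic point $0$; no pushforward of the CLE and no tightness of conformal automorphisms is needed.

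The genuine issue is that the step you yourself flag as the main obstacle --- identifying the limit of the origin-surrounding loop --- is the entire content of the lemma, and the argument you sketch for it does not suffice. Knowing that $0$ is a.s.\ not on the CLE$_4$ gasket and that the full gaskets (or even the individual loops) converge in the Hausdorff distance does not identify $\lim \ell_\gamma^{J_\gamma}$: a closed curve can be Hausdorff-close to $\ell_2^{J_2}$ without enclosing $0$, and nothing in a purely Hausdorff statement rules out that the loops of $\Gamma_\gamma$ surrounding $0$ collapse, or converge to a loop other than $\ell_2^{J_2}$ while the gasket as a set still converges. The way this is actually closed (and the way \cite[Appendix A]{as-cle-integrability} and the paper's intended argument proceed) is to use the monotone coupling quantitatively rather than passing to an abstract ``continuity of the functional'' statement: couple so that $\mcl B_\gamma \subset \mcl B_2$ with $\mcl B_2\setminus \mcl B_\gamma$ a loop soup of intensity $(1-\cc(\gamma^2))/2 \to 0$. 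Monotonicity gives that every loop of $\Gamma_\gamma$ either lies in $\ol U_2^{J_2}$ or encloses a region disjoint from $U_2^{J_2}$; in particular any loop of $\Gamma_\gamma$ surrounding $0$ lies in $\ol U_2^{J_2}$. Since the maximal diameter of the clusters of the difference soup tends to $0$ a.s., the largest loop of $\Gamma_\gamma$ contained in $\ol U_2^{J_2}$ converges to $\ell_2^{J_2}$ and its enclosed region increases to $U_2^{J_2}$; hence it eventually surrounds $0$, so it equals $\ell_\gamma^{J_\gamma}$ for $\gamma$ close enough to $2$, giving $\ell_\gamma^{J_\gamma}\to\ell_2^{J_2}$, and the same mechanism gives $\ol G_\gamma\to\ol G_2$. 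So your plan is salvageable, but you must either carry out this monotone-coupling argument (or an equivalent quantitative stability statement) or cite a reference that does, rather than appealing to generic regularity of CLE$_4$.
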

\begin{proof}
	This follows from the Brownian loop soup description of CLE~\cite{shef-werner-cle}, see~\cite[Appendix A]{acsw-loop} for details. 
\end{proof}

By combining the LQG convergence statement of Lemma~\ref{lem-disk-conv} and the CLE convergence statement of Lemma~\ref{lem-cle-conv}, we get the following convergence statement for LQG surfaces.

\begin{lem} \label{lem-in-out-conv}
	Let $\mcl S_\gamma^{J_\gamma}$ and $\mcl A_\gamma$ be the LQG surfaces parametrized by the regions inside and outside of the origin-containing loop $ \ell_\gamma^{J_\gamma}$, as in~\eqref{eqn-loop-surface} and~\eqref{eqn-outer-annulus}.   
	By a slight abuse of notation, we identify $\mcl S_\gamma^{J_\gamma}$ with the LQG surface with the annulus topology
	\eqb \label{eqn-puncture}
	\left( U_\gamma^{J_\gamma} \setminus \{0\} , \ell_\gamma^{J_\gamma}  , \Phi|_{U_\gamma^{J_\gamma}}  \right) / {\sim}_{Q_\gamma} . 
	\eqe  
	As $\gamma\to 2^-$, we have $(\mcl S_\gamma^{J_\gamma}  ,\mcl A_\gamma) \to ( \mcl S_2^{J_2} , \mcl A_2)$ in distribution with respect to the topology of Section~\ref{sec-annulus-metric}.   
\end{lem}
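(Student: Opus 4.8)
The plan is to deduce the joint convergence of $(\mcl S_\gamma^{J_\gamma}, \mcl A_\gamma)$ from the two convergence statements already in hand — Lemma~\ref{lem-disk-conv} (convergence of the $\gamma$-LQG disk field together with its rescaled area and length measures) and Lemma~\ref{lem-cle-conv} (convergence of the origin-surrounding loop $\ell_\gamma^{J_\gamma}$ and the CLE gasket closure $\ol G_\gamma$) — by first coupling everything on a common probability space and then checking that the map from the ``raw data'' $(\Phi_\gamma, \ep^{-1}\mu_\gamma, \ep^{-1}\nu_\gamma, \ell_\gamma^{J_\gamma}, \ol G_\gamma)$ to the pair of LQG surfaces $(\mcl S_\gamma^{J_\gamma}, \mcl A_\gamma)$, viewed as elements of the relevant metric spaces from Section~\ref{sec-annulus-metric}, is a.s.\ continuous at the limiting value. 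The first step is therefore to invoke Skorokhod representation: since $\Phi_\gamma$ is independent of $\Gamma_\gamma$ (hence of $(\ell_\gamma^{J_\gamma}, \ol G_\gamma)$) and the limiting field $\Phi_2$ is independent of $(\ell_2^{J_2}, \ol G_2)$, the joint law of the raw data converges, and we may pass to a coupling in which the raw data converges almost surely — $\Phi_\gamma \to \Phi_2$ in $\mcl H^{-1}(\BB D)$, $\ep^{-1}\mu_\gamma \to \mu_2$ and $\ep^{-1}\nu_\gamma\to\nu_2$ weakly, $\ell_\gamma^{J_\gamma}\to\ell_2^{J_2}$ and $\ol G_\gamma\to\ol G_2$ in the Hausdorff distance.

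Given this a.s.\ convergence, the second step is to produce explicit embeddings of the two surfaces into reference domains and show the embedded data converges. For $\mcl S_\gamma^{J_\gamma}$ — identified via~\eqref{eqn-puncture} with a set-decorated annulus-topology surface — one uses the Riemann map $g_\gamma : U_\gamma^{J_\gamma}\setminus\{0\} \to \BB D \setminus \ol{B_{r_\gamma}(0)}$ normalized appropriately; since $U_\gamma^{J_\gamma}$ converges to $U_2^{J_2}$ as a domain (because $\ell_\gamma^{J_\gamma}\to\ell_2^{J_2}$ in Hausdorff distance and, by the argument already sketched in the commented-out proof of Lemma~\ref{lem-cle-conv}, the enclosed regions increase to $U_2^{J_2}$), Carathéodory convergence of the domains gives local uniform convergence of the conformal maps $g_\gamma\to g_2$ and of their derivatives. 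Then $\ep^{-1}(g_\gamma)_*\mu_\gamma \to (g_2)_*\mu_2$ weakly (pushforward under locally uniformly convergent maps of weakly convergent measures, using that no mass escapes to the inner boundary, which follows from finiteness of the limiting disk area), $g_\gamma(\ol G_\gamma \cap A_\gamma$-complement$)$ converges in Hausdorff distance, and $r_\gamma\to r_2$. The same recipe applies to $\mcl A_\gamma$ with the map sending $A_\gamma$ to a round annulus fixing $\bdy\BB D$ as the outer boundary; here the decorating set is $G_\gamma$ itself and one uses that $G_\gamma = \ol G_\gamma \cap A_\gamma$ together with the Hausdorff convergence of $\ol G_\gamma$ and of $\bdy U_\gamma^{J_\gamma}$. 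Combining, the pair $(\mcl S_\gamma^{J_\gamma}, \mcl A_\gamma)$ converges a.s.\ in the product topology of Section~\ref{sec-annulus-metric}, and undoing the Skorokhod coupling gives convergence in distribution.

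The main obstacle is the interface between the weak convergence of the bulk LQG area measure and the excision along the loop $\ell_\gamma^{J_\gamma}$: a priori, weak convergence $\ep^{-1}\mu_\gamma\to\mu_2$ on $\BB D$ does not control the restrictions $\ep^{-1}\mu_\gamma|_{U_\gamma^{J_\gamma}}$ and $\ep^{-1}\mu_\gamma|_{A_\gamma}$ unless one knows that $\mu_2$ does not charge $\ell_2^{J_2}$ (equivalently $\bdy U_2^{J_2}$). This is true because the limiting field $\Phi_2$ is independent of the CLE$_4$, and for a fixed deterministic (or independent) SLE$_4$-type curve the critical LQG area measure assigns zero mass to the curve — but this needs to be cited or argued (e.g.\ via the fact that $\bdy U_2^{J_2}$ has zero Lebesgue measure and the critical GMC measure of a Lebesgue-null set is a.s.\ zero, or via~\cite{benoist-lqg-chaos}-type reasoning, or simply as a consequence of the a.s.\ well-definedness of the LQG length of $\ell_2^{J_2}$ as a lower-dimensional measure). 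A secondary technical point is ensuring that the inner boundary of the limiting annulus $\mcl A_2$ — namely $\bdy U_2^{J_2}$ — is not a single point, i.e.\ that the conformal modulus of $A_2$ is nondegenerate; this holds because $U_2^{J_2}$ has positive LQG boundary length a.s.\ by Lemma~\ref{lem-length-tight} combined with the convergence in Lemma~\ref{lem-disk-conv}, so $\ell_2^{J_2}$ is a genuine loop. Once these two points are dispatched, the rest is the routine ``continuity of conformal welding data under Carathéodory convergence'' argument.
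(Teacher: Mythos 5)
Your proposal is correct and follows essentially the same route as the paper's proof: a Skorokhod coupling of the convergences from Lemmas~\ref{lem-disk-conv} and~\ref{lem-cle-conv}, Carath\'eodory-type convergence of the uniformizing conformal maps for $U_\gamma^{J_\gamma}\setminus\{0\}$ and $A_\gamma$ (including $r_\gamma\to r_2$), convergence of the pulled-back measures and of the gasket in the Hausdorff distance, and then the definition of the metric~\eqref{eqn-annulus-metric}. The two technical points you flag (that $\mu_2$ a.s.\ does not charge $\ell_2^{J_2}$, via independence and the fact that the loop is Lebesgue-null, and the nondegeneracy of the limiting annulus) are left implicit in the paper's argument, and your treatment of them is sound.
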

\begin{proof}
	For $\gamma\in (0,2]$, let $f_\gamma : \BB D \to  U_\gamma^{J_\gamma}$ be the conformal map such that $f_\gamma(0) =0$ and $f_\gamma'(0) > 0$. Also let $r_\gamma \in (0,1)$ be the conformal modulus of $A_\gamma$ and let $g_\gamma : \BB D \setminus \ol{B_{r_\gamma}(0)} \to A_\gamma$ be the conformal map such that $g_\gamma(1) =1 \in \bdy\BB D$. 
	
	By Lemmas~\ref{lem-disk-conv} and~\ref{lem-cle-conv} and the Skorokhod theorem, we can couple $\{(\Phi_\gamma , \Gamma_\gamma)\}_{\gamma \in (\sqrt{8/3},2]}$ so that a.s.\ $\ep^{-1} \mu_\gamma \to \mu_2$ in the Prokhorov distance and $\ell_\gamma^{J_\gamma} \to \ell_2^{J_2}$ and $\ol G_\gamma \to \ol G_2$ in the Hausdorff distance. 
	In any such coupling, one has a.s.\ $f_\gamma \to f_2$ uniformly on compact subsets of $\BB D$ and a.s.\ $r_\gamma \to r_2$ and $g_\gamma \to g_2$ uniformly on compact subsets of $\BB D\setminus \ol{B_{r_2}(0)}$.  
	This implies the a.s.\ weak convergence of the pullback measures 
	\eqbn
	\ep^{-1} f_\gamma^* \mu_\gamma  \to f_2^* \mu_2 \quad \text{and} \quad \ep^{-1} g_\gamma^* \mu_\gamma \to g_2^* \mu_2  
	\eqen
	and also the a.s.\ Hausdorff distance convergence $f_\gamma^{-1} (\ol G_\gamma)  \to f_2^{-1}(\ol G_\gamma)  $.
	This, in turn, implies the lemma statement by the definition of the topology on LQG surfaces from~\eqref{eqn-annulus-metric}. 
\end{proof}

We will now combine Lemmas~\ref{lem-weighting-subcritical} and~\ref{lem-in-out-conv} to deduce Lemma~\ref{lem-weighting}, which is the analog of Lemma~\ref{lem-weighting-subcritical} for $\gamma =2$. To that end, we will need the following elementary fact about convergence of conditional distributions, see, e.g.,~\cite[Lemma 4.3]{gp-sle-bubbles}.
\begin{lem}\label{lem-cond-limit}
	Let $(X_n, Y_n)$ be a sequence of pairs of random variables taking values in a product of separable metric spaces $\Omega_X \times \Omega_Y$ and let $(X,Y)$ be another such pair such that $(X_n , Y_n) \to (X, Y)$ in law. Suppose further that there is a family of probability measures $\{ P_y:y \in \Omega_Y\}$ on $\Omega_X$, indexed by $\Omega_Y$, such that for every bounded continuous function $f: \Omega_X \to \BB R$, 
	\[(\BB E[f(X_n)\mid Y_n], Y_n) \to (\BB E_{P_Y}(f), Y) \quad \text{ in law}.\]
	Then $P_Y$ is the regular conditional law of $X$ given $Y$. 
\end{lem}

\begin{lem} \label{lem-weighting}
	Almost surely, the conditional law of the origin-containing LQG surface $\mcl S_2^{J_2}$ given 
	$\mcl A_2$ is equal to the law of a critical LQG disk with boundary length $\nu_2(\ell_2^{J_2})$ weighted by $\mu_2(U_2^{J_2}) / \mu_2(\BB D)$ (normalized to be a probability measure). 
	Furthermore, we have the joint convergence in law
	\eqb \label{eqn-origin-loop-conv}
	\left( \mcl S_\gamma^{J_\gamma}  ,\mcl A_\gamma , \ep^{-1} \nu_\gamma(\ell_\gamma^{J_\gamma}) \right) 
	\to \left( \mcl S_2^{J_2} , \mcl A_2 , \nu_2(\ell_2^{J_2}) \right) 
	\eqe  
	and this convergence in law also holds jointly with the convergence in law from Lemmas~\ref{lem-disk-conv} and~\ref{lem-cle-conv}.
\end{lem}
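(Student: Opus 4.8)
The plan is to prove Lemma~\ref{lem-weighting} in two parts. For the convergence statement~\eqref{eqn-origin-loop-conv}, I would combine Lemma~\ref{lem-in-out-conv} (which already gives $(\mcl S_\gamma^{J_\gamma},\mcl A_\gamma)\to(\mcl S_2^{J_2},\mcl A_2)$) with the tightness of $\ep^{-1}\nu_\gamma(\ell_\gamma^{J_\gamma})$ from Lemma~\ref{lem-length-tight}. Tightness gives subsequential limits, and since $\nu_\gamma(\ell_\gamma^{J_\gamma})$ equals the $\nu_\gamma$-length of one of the two boundary components of $\mcl A_\gamma$ (as noted after Lemma~\ref{lem-weighting-subcritical}), the third coordinate is a measurable function of the second, so any subsequential limit of the third coordinate is the corresponding function of $\mcl A_2$, namely $\nu_2(\ell_2^{J_2})$; this identifies the limit and upgrades tightness to convergence. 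More carefully, I would use the Skorokhod representation (as in the proof of Lemma~\ref{lem-in-out-conv}) to couple everything so that $\ep^{-1}\mu_\gamma\to\mu_2$ in Prokhorov and $\ell_\gamma^{J_\gamma}\to\ell_2^{J_2}$, $\ol G_\gamma\to\ol G_2$ in Hausdorff almost surely; then the $\nu$-length of $\ell_\gamma^{J_\gamma}$, being (via the pullback maps $g_\gamma$) the boundary-length measure of the annular surface, converges to that of $\mcl A_2$ because the boundary length measure is a continuous functional of the surface — here I would invoke the convergence of the LQG boundary length under $\gamma\to2^-$ as in Lemma~\ref{lem-disk-conv}, applied to the boundary component $\ell_\gamma^{J_\gamma}$. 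Since the identified limit is deterministic given $\mcl A_2$, the whole convergence holds jointly with Lemmas~\ref{lem-disk-conv} and~\ref{lem-cle-conv}.

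For the conditional law statement, the strategy is to pass Lemma~\ref{lem-weighting-subcritical} to the limit. Let $F$ be a bounded continuous function of a (punctured-disk / annulus-topology) LQG surface and $H$ a bounded continuous function of a set-decorated annular surface. Lemma~\ref{lem-weighting-subcritical} says
\allb \label{eqn-weighting-test}
\BB E\left[ F(\mcl S_\gamma^{J_\gamma}) H(\mcl A_\gamma) \right]
= \BB E\left[ H(\mcl A_\gamma) \, \BB E_{\nu_\gamma(\ell_\gamma^{J_\gamma})}^{\op{disk}}\!\left[ F \cdot \frac{\mu(U)}{\mu(U) + \mu_\gamma(A_\gamma)} \right] \Big/ \BB E_{\nu_\gamma(\ell_\gamma^{J_\gamma})}^{\op{disk}}\!\left[ \frac{\mu(U)}{\mu(U)+\mu_\gamma(A_\gamma)} \right] \right],
\alle
where $\BB E_L^{\op{disk}}$ denotes expectation over an independent $\gamma$-LQG disk of boundary length $L$ (with $\mu(U)$ its total area), and $\mu_\gamma(A_\gamma)$ is the area of the fixed outer surface $\mcl A_\gamma$. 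Now on the left-hand side use~\eqref{eqn-origin-loop-conv} to pass to the limit; on the right-hand side use~\eqref{eqn-origin-loop-conv} together with Lemma~\ref{lem-disk-conv} (which gives joint convergence in law of a $\gamma$-LQG disk of boundary length $L_\gamma\to L$, after the $\ep^{-1}$ rescaling of area, to a critical LQG disk of boundary length $L$) to pass the inner weighted expectations to the limit. The upshot is
\allb \label{eqn-weighting-test-limit}
\BB E\left[ F(\mcl S_2^{J_2}) H(\mcl A_2) \right]
= \BB E\left[ H(\mcl A_2)\, \BB E_{\nu_2(\ell_2^{J_2})}^{\op{disk}, 2}\!\left[ F\cdot \frac{\mu(U)}{\mu(U)+\mu_2(A_2)} \right] \Big/ \BB E_{\nu_2(\ell_2^{J_2})}^{\op{disk},2}\!\left[ \frac{\mu(U)}{\mu(U)+\mu_2(A_2)} \right] \right],
\alle
which is exactly the assertion that the conditional law of $\mcl S_2^{J_2}$ given $\mcl A_2$ is a critical LQG disk of boundary length $\nu_2(\ell_2^{J_2})$ weighted by $\mu(U)/\mu_2(\BB D) = \mu(U)/(\mu(U)+\mu_2(A_2))$.

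The main obstacle is the continuity/uniform-integrability needed to pass the weighted expectations on the right-hand side of~\eqref{eqn-weighting-test} to the limit. The weight $\mu(U)/(\mu(U)+\mu_\gamma(A_\gamma))$ is bounded by $1$, which helps, but the denominator can be small; one must rule out mass escaping to the event $\{\mu(U)+\mu_\gamma(A_\gamma)\approx 0\}$, i.e., show the normalizing constant $\BB E_{L_\gamma}^{\op{disk}}[\mu(U)/(\mu(U)+\mu_\gamma(A_\gamma))]$ stays bounded away from $0$ uniformly, and establish joint convergence of the pair (fixed outer area $\mu_\gamma(A_\gamma)$, inner disk of length $L_\gamma$). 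Since $\mu_\gamma(A_\gamma)$ is a functional of $\mcl A_\gamma$ and the inner disk is independent of $\mcl A_\gamma$ given $\nu_\gamma(\ell_\gamma^{J_\gamma})$, I would handle this by first conditioning on $\mcl A_\gamma$, using Lemma~\ref{lem-disk-conv} for the weak convergence of the conditionally-independent inner disk together with the a.s.\ positivity of the limiting area (already exploited in the proof of Lemma~\ref{lem-length-tight}) to get that the map $L\mapsto \BB E_L^{\op{disk},2}[\cdots]$ is continuous and positive, and then using bounded convergence in the outer randomness. A secondary technical point is the abuse of notation identifying $\mcl S_\gamma^{J_\gamma}$ with its punctured/annulus-topology version in~\eqref{eqn-puncture}, which is needed so that the test function $F$ sees a topology in which Lemma~\ref{lem-in-out-conv} gives convergence; this is routine given Section~\ref{sec-annulus-metric}.
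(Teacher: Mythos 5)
There is a genuine gap in your first step, the identification of the limit of the rescaled loop lengths. You argue that since $\nu_\gamma(\ell_\gamma^{J_\gamma})$ is the length of a boundary component of $\mcl A_\gamma$, "the third coordinate is a measurable function of the second," so subsequential limits must be $\nu_2(\ell_2^{J_2})$; and then, "more carefully," that the boundary length is a continuous functional of the surface, invoking Lemma~\ref{lem-disk-conv} "applied to the boundary component $\ell_\gamma^{J_\gamma}$." Neither version is justified. Measurability does not pass through convergence in distribution, and the functional in question is not continuous for the topology of Section~\ref{sec-annulus-metric}: an element of $\BB A$ records only the (rescaled) area measure, the decorating set, and the modulus, and the LQG length of the inner boundary is recovered from the area measure only through the highly non-continuous (and $\gamma$-dependent) measurability statement of \cite{bss-lqg-gff}. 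Moreover, Lemma~\ref{lem-disk-conv} gives convergence of $\ep^{-1}\nu_\gamma$ as a measure on $\bdy\BB D$ for the disk embedding; it says nothing about the $\gamma$-LQG length measure on an interior SLE$_{\gamma^2}$-type loop such as $\ell_\gamma^{J_\gamma}$. Indeed, the non-obviousness of the convergence $\ep^{-1}\nu_\gamma(\ell_\gamma^{J_\gamma})\to\nu_2(\ell_2^{J_2})$ is exactly one of the two obstacles flagged in Section~\ref{sec-outline}, so it cannot be dispatched by a continuity assertion. The paper's proof runs in the opposite order: by Lemmas~\ref{lem-in-out-conv} and~\ref{lem-length-tight} one extracts a subsequential limit $\left( \mcl S_2^{J_2},\mcl A_2,X\right)$ with $X$ \emph{a priori} unidentified, passes the conditional-law description of Lemma~\ref{lem-weighting-subcritical} to the limit to conclude that given $\mcl A_2$ the surface $\mcl S_2^{J_2}$ is a weighted critical LQG disk of boundary length $X$, and only then deduces $X=\nu_2(\ell_2^{J_2})$ because the boundary length of $\mcl S_2^{J_2}$ is $\nu_2(\ell_2^{J_2})$ by definition. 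The identification is a consequence of the conditional-law statement, not an input to it.

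Your second step (passing the weighted Radon--Nikodym description to the limit via bounded continuous test functions, with the weight $\mu(U)/(\mu(U)+\mu_\gamma(A_\gamma))$ bounded by $1$ and the normalizing constant bounded away from $0$ using the a.s.\ positivity of the limiting area) is essentially sound and is close in spirit to what the paper does, which instead invokes an elementary lemma on convergence of conditional distributions (\cite[Lemma 4.3]{gp-sle-bubbles}) after a Skorokhod coupling. But as written it presupposes the convergence $\ep^{-1}\nu_\gamma(\ell_\gamma^{J_\gamma})\to\nu_2(\ell_2^{J_2})$ from your first step, which is the unproven point. If you instead run your test-function argument along a subsequence with the third coordinate converging to some $X$, you obtain the limiting identity with a critical disk of boundary length $X$, and the equality $X=\nu_2(\ell_2^{J_2})$ then falls out as above; with that reordering (plus the routine scaling of Lemma~\ref{lem-disk-conv} to boundary lengths $L_\gamma$ with $\ep^{-1}L_\gamma\to L$) your argument becomes a correct variant of the paper's proof.
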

\begin{proof}
	By Lemma~\ref{lem-weighting-subcritical}, the lemma statement is true with $\gamma=2$ replaced by any $\gamma\in (\sqrt{8/3},2)$. 
	
	To pass to the limit as $\gamma\to 2$, we first address the convergence of the re-scaled LQG length $\ep^{-1} \nu_\gamma(\ell_\gamma^{J_\gamma})$. 
	By Lemmas~\ref{lem-in-out-conv} and~\ref{lem-length-tight}, for any sequence of $\gamma$-values increasing to 2, we can find a subsequence $\mcl G$ and a coupling $(\mcl S_2^{J_2} , \mcl A_2)$ with a random variable $X$ such that along $\mcl G$,
	\eqb  \label{eqn-weighting-conv}
	\left( \mcl S_\gamma^{J_\gamma}  ,\mcl A_\gamma , \ep^{-1} \nu_\gamma(\ell_\gamma^{J_\gamma}) \right) 
	\to \left( \mcl S_2^{J_2} , \mcl A_2 , X \right) 
	\eqe 
	in distribution. By the Skorokhod theorem, we can couple so that the convergence~\eqref{eqn-weighting-conv} in fact occurs a.s.\ along $\mcl G$.  
	
	By Lemma~\ref{lem-disk-conv}, the a.s.\ convergence~\eqref{eqn-weighting-conv} implies that a.s.\ the conditional law of $\mcl S_\gamma^{J_\gamma}$ given $\mcl A_\gamma$ converges weakly as $\gamma \to 2^-$ to the law of a critical LQG disk with boundary length $X$ weighted by $\mu_2(U_2^{J_2}) / \mu_2(\BB D)$ (normalized to be a probability measure). 
	By Lemma~\ref{lem-cond-limit},
	this implies that the conditional law of $\mcl S_2^{J_2}$ given $\mcl A_2$ is that of a critical LQG disk with boundary length $X$ weighted by $\mu_2(U_2^{J_2}) / \mu_2(\BB D)$ (normalized to be a probability measure). 
	This, in turn, implies that we must in fact have $X = \nu_2(\ell_2^{J_2})$. This gives the lemma statement and also the convergence in law~\eqref{eqn-origin-loop-conv}. 
	
	To get that the convergence in~\eqref{eqn-origin-loop-conv} also holds jointly with the convergence in law from Lemmas~\ref{lem-disk-conv} and~\ref{lem-cle-conv}, we simply choose our initial coupling so that the convergence statements from those lemmas occur a.s.\ as well.  
\end{proof}

To deduce Theorem~\ref{thm-critical-cle4} from Lemma~\ref{lem-weighting}, we need the following elementary probabilistic lemma. 

\begin{lem} \label{lem-cond-weight}
	Let $\{X_j\}_{j\in\BB N}$ be random variables taking values in a separable metric space $\mcl X$.
	Let $f : \mcl X \to [0,\infty)$ be a measurable function such that $\sum_{j=1}^\infty f(X_j) < \infty$ a.s.
	Conditional on $\{X_j\}_{j\in\BB N}$, let $J \in \BB N$ be sampled from the probability measure 
	\eqbn
	\BB P\left[ J = k \,|\, \{X_j\}_{j\in\BB N} \right]  = \frac{f(X_k)}{\sum_{j=1}^\infty f(X_j)} .
	\eqen
	Then for each $k\in\BB N$, the conditional law of $X_k$ given $\{J=k\}$ and $\{X_j\}_{j\not=k}$ is equal to the conditional law of $X_k$ given $\{X_j\}_{j\not=k}$ weighted by the Radon-Nikodym derivative 
	\eqbn
	\frac{f(X_k)}{\sum_{j=1}^\infty f(X_j)} \times \BB E\left[ \frac{f(X_k)}{\sum_{j=1}^\infty f(X_j)} \,|\, \{X_j\}_{j\not=k} \right]^{-1} .
	\eqen
\end{lem}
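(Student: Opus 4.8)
The plan is to verify the claimed identity directly from the definition of conditional expectation, viewing it as an instance of Bayes' rule for size-biased selection. Write $S := \sum_{j=1}^\infty f(X_j)$, which is a.s.\ finite by hypothesis and which is a.s.\ positive on the event $\{f(X_k) > 0 \text{ for some } k\}$ (off this event every quantity in the statement is trivial). Since $\mcl X$ is separable metric, regular conditional distributions exist; let $\mu_k$ denote a regular conditional law of $X_k$ given $\{X_j\}_{j\neq k}$, so that the goal is to identify the regular conditional law of $X_k$ given $\{J=k\}$ and $\{X_j\}_{j\neq k}$ as the measure $\mu_k$ reweighted by the asserted Radon-Nikodym derivative.

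First I would record two consequences of the prescribed law of $J$. For any bounded measurable $g : \mcl X \to \BB R$ and any bounded $\sigma(\{X_j\}_{j\neq k})$-measurable random variable $H$, conditioning on $\{X_j\}_{j\in\BB N}$ gives
\[
\BB E\!\left[\BB 1_{\{J=k\}} H\, g(X_k)\right] = \BB E\!\left[H\, g(X_k)\, \BB P[J=k \mid \{X_j\}_{j\in\BB N}]\right] = \BB E\!\left[H\, g(X_k)\, \tfrac{f(X_k)}{S}\right].
\]
Taking $g \equiv 1$ and applying the tower property over $\sigma(\{X_j\}_{j\neq k})$ shows in particular that $\BB P[J=k \mid \{X_j\}_{j\neq k}] = \BB E[\tfrac{f(X_k)}{S} \mid \{X_j\}_{j\neq k}]$, which is precisely the normalizing constant appearing in the lemma; note it is well defined since $0 \le f(X_k)/S \le 1$.

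Next I would combine these. Conditioning the right-hand side of the displayed identity on $\sigma(\{X_j\}_{j\neq k})$ gives $\BB E[H\,\BB E[g(X_k)\tfrac{f(X_k)}{S} \mid \{X_j\}_{j\neq k}]]$. On the event $\{\BB P[J=k\mid\{X_j\}_{j\neq k}]>0\}$ I would write $\BB E[g(X_k)\tfrac{f(X_k)}{S}\mid\{X_j\}_{j\neq k}]$ as $\BB P[J=k\mid\{X_j\}_{j\neq k}]$ times the ratio $R := \BB E[g(X_k)\tfrac{f(X_k)}{S}\mid\{X_j\}_{j\neq k}] / \BB E[\tfrac{f(X_k)}{S}\mid\{X_j\}_{j\neq k}]$ (off that event both $\BB 1_{\{J=k\}}$ and the numerator vanish a.s., so nothing is lost). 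Pulling $\BB P[J=k\mid\{X_j\}_{j\neq k}] = \BB E[\BB 1_{\{J=k\}}\mid\{X_j\}_{j\neq k}]$ back inside yields $\BB E[\BB 1_{\{J=k\}} H\, g(X_k)] = \BB E[\BB 1_{\{J=k\}} H\, R]$. Since $H$ ranges over all bounded $\sigma(\{X_j\}_{j\neq k})$-measurable random variables and $R$ is $\sigma(\{X_j\}_{j\neq k})$-measurable, this identifies $\BB E[g(X_k)\mid \{J=k\},\{X_j\}_{j\neq k}] = R$. Finally, rewriting $R = \int g(x)\,\tfrac{f(x)/s(x)}{\BB E[f(X_k)/S\mid\{X_j\}_{j\neq k}]}\,\mu_k(dx)$, where $s(x) := f(x) + \sum_{j\neq k} f(X_j)$ so that $f(x)/s(x)$ is the value of $f(X_k)/\sum_j f(X_j)$ at $X_k = x$, exhibits exactly the Radon-Nikodym derivative in the statement.

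I do not expect a genuine obstacle here. The only points needing a modicum of care are the existence of regular conditional distributions (handled by separability of $\mcl X$), the integrability required to manipulate the expectations (immediate, since $f(X_k)/S \le 1$ makes every integrand bounded by $\|g\|_\infty$), and restricting to the event where the normalizing denominator is strictly positive, which coincides with $\{\BB P[J=k\mid\{X_j\}_{j\neq k}]>0\}$ and off which $\BB 1_{\{J=k\}}$ vanishes conditionally, so the reweighted conditional law is unambiguously defined on all of the relevant event.
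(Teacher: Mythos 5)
Your proposal is correct and follows essentially the same route as the paper: the paper conditions on $\{X_j\}_{j\neq k}$, computes $\ol{\BB P}_k[J=k\mid X_k\in A]$ and $\ol{\BB P}_k[J=k]=\ol{\BB E}_k\bigl[f(X_k)/\sum_j f(X_j)\bigr]$ from the definition of $J$, and concludes by Bayes' rule, which is exactly your computation phrased with indicator sets $A$ in place of test functions $g$ and $H$. Your extra care about regular conditional distributions and the positivity of the normalizing denominator is fine but not a substantive difference.
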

\begin{proof}
	To lighten notation, we introduce the conditional probability measure 
	\eqbn
	\ol{\BB P}_k[\cdot] = \BB P[\cdot \,|\, \{X_j\}_{j\not=k} ] 
	\eqen
	and we write $\ol{\BB E}_k$ for the corresponding expectation.
	
	Let $A\subset \mcl X$ be measurable. By the definition of $J$,  
	\eqb \label{eqn-cond-weight-top}
	\ol{\BB P}_k\left[ J = k \,|\, X_k \in A \right] 
	=  \frac{1}{\ol{\BB P}_k[X_k\in A]} \ol{\BB E}_k\left[ \frac{f(X_k)}{\sum_{j=1}^\infty f(X_j)} \BB 1_{(X_k \in A)} \right] 
	\eqe  
	and
	\eqb  \label{eqn-cond-weight-bottom}
	\ol{\BB P}_k\left[ J = k  \right] 
	=  \ol{\BB E}_k\left[ \frac{f(X_k)}{\sum_{j=1}^\infty f(X_j)}   \right] . 
	\eqe  
	By Bayes' rule, 
	\eqbn
	\ol{\BB P}_k\left[ X_k  \in A \,|\, J = k\right] 
	= \frac{\ol{\BB P}_k\left[ J = k \,|\, X_k \in A \right]}{\ol{\BB P}_k\left[ J = k \right]} \ol{\BB P}_k\left[   X_k \in A \right] .
	\eqen
	Plugging in~\eqref{eqn-cond-weight-top} and~\eqref{eqn-cond-weight-bottom} concludes the proof.
\end{proof}

\begin{proof}[Proof of Theorem~\ref{thm-critical-cle4}]
	Recall the LQG surfaces $\mcl S_2^j$ parametrized by the interiors of the CLE$_4$ loops in~\eqref{eqn-loop-surface} and the random variable $J_2$ which tells us the index of the origin-containing loop. 
	For a given $k\in\BB N$, we will give two different descriptions of the conditional law of $\mcl S_2^{k}$ given $\{ \nu_2(\ell_2^j)\}_{j\in\BB N}$ and $\{\mcl S_2^j \}_{ j \not= k}$, one coming from Lemma~\ref{lem-cond-weight} and the other coming from Lemma~\ref{lem-weighting}. We will then equate these two descriptions to conclude the proof. 
	
	We apply Lemma~\ref{lem-cond-weight} under the conditional law given $\{ \nu_2(\ell_2^j)\}_{j\in\BB N}$, with the random variables $X_j = \mcl S_2^j$ as in~\eqref{eqn-loop-surface} and the function $f(\mcl S_2^j) = \mu_2(U_2^j)$. 
	Due to~\eqref{eqn-marked-loop-law}, Lemma~\ref{lem-cond-weight} in this setting tells us that for each $k\in\BB N$, the conditional law of $\mcl S_2^k$ given $\{J_\gamma = k\}$, $\{ \nu_2(\ell_2^j)\}_{j\in\BB N}$, and $\{\mcl S_2^j \}_{ j \not= k}$ is the same as the conditional law of $\mcl S_2^{k}$ given $\{ \nu_2(\ell_2^j)\}_{j\in\BB N}$ and $\{\mcl S_2^j \}_{ j \not= k}$ weighted by 
	\eqb  \label{eqn-use-cond-weight}
	\frac{\mu_2(U_2^k)}{\mu_2(\BB D)}
	\eqe 
	(normalized to be a probability measure). 
	
	On the other hand, recall the outer LQG surface $\mcl A_2$ of~\eqref{eqn-outer-annulus}. The conditional law of $\mathcal S_2^{J_2}$ given $\mathcal A_2$ (Lemma~\ref{lem-weighting}) depends only on $\nu_2(\ell_2^{J_2})$ and $\mu_2(\mathcal A_2 )$, which are determined by $J_2$, $\{ \nu_2(\ell_2^j)\}_{j\in\BB N}$, and $\{\mcl S_2^j \}_{ j \not= J_2}$.
	Hence Lemma~\ref{lem-weighting} implies that the conditional law of $\mcl S_2^{J_2}$ given $J_2$, $\{ \nu_2(\ell_2^j)\}_{j\in\BB N}$, and $\{\mcl S_2^j \}_{ j \not= J_2}$ is as described in Lemma~\ref{lem-weighting}. Equivalently, for each $k\in\BB N$, the conditional law of $\mcl S_2^k$ given $\{J_\gamma = k\}$, $\{ \nu_2(\ell_2^j)\}_{j\in\BB N}$, and $\{\mcl S_2^j \}_{ j \not= k}$ is the same as the law of a critical LQG disk with boundary length $\nu_2(\ell_2^k)$, weighted by
	\eqb  \label{eqn-use-weighting}
	\frac{\mu_2(U_2^k)}{\mu_2(\BB D)}
	\eqe 
	(normalized to be a probability measure). 
	
	By comparing the descriptions of the conditional law in the sentences containing~\eqref{eqn-use-cond-weight} and~\eqref{eqn-use-weighting}, we see that the conditional law of $\mcl S_2^k$ given only $\{ \nu_2(\ell_2^j)\}_{j\in\BB N}$ and $\{\mcl S_2^j \}_{ j \not= k}$ is that of a critical LQG disk with the given boundary length $\nu_2(\ell_2^k)$. This holds for any $k\in\BB N$, so we get that the LQG surfaces $\{\mcl S_2^j\}_{j\in\BB N}$ are conditionally independent LQG disks given their boundary lengths. 
\end{proof}

\subsection{Proof of Theorem~\ref{thm-critical-lengths}}
\label{sec-length-proof}

We continue to use the notation and setup of Section~\ref{sec-details}. In particular, $\Phi_\gamma$ is an embedding of a $\gamma$-LQG disk into $\BB D$, $\Gamma_\gamma$ is an independent CLE$_{\gamma^2}$, $\{\ell_\gamma^j\}_{j\in\BB N}$ denote the loops of $\Gamma_\gamma$ enumerated in decreasing order of their $\nu_\gamma$-lengths, and $\ep = 2(2-\gamma)$.

Just as in the case of Theorem~\ref{thm-critical-cle4}, we will deduce Theorem~\ref{thm-critical-lengths} from its subcritical analog and a limiting argument. Here is the subcritical statement we will use. We follow the presentation of \cite[Proposition 4.1]{acsw-loop}. 

\begin{thm}[{\cite{msw-simple-cle-lqg,ccm-cascade,bbck-growth-frag}}] \label{thm-subcritical-lengths}
	Let $\gamma\in (\sqrt{8/3},4)$ and let $\beta = \beta(\gamma) = \frac{4}{\gamma^2}+\frac12 \in (\frac32,2)$. 
	Let $\zeta_\gamma$ be a $\beta$-stable L\'evy process with no downward jumps, with L\'evy measure $\BB 1_{(x>0)} x^{-\beta-1} \,dx$.
	Let $P_\gamma$ be the law of $\zeta_\gamma$.  
	Let $\tau_\gamma  = \inf\{t\geq 0 : \zeta_\gamma(t)  = -1\}$.
	Let $\{X_\gamma^j\}_{j\in\BB N}$ be the sequence of upward jumps of $\zeta_\gamma$ on $[0,\tau_\gamma]$, listed in decreasing order.
	Then the law of the re-scaled $\gamma$-LQG lengths of the CLE$_{\gamma^2}$ loops $\{\ep^{-1} \nu_\gamma(\ell_\gamma^j)\}_{j\in\BB N}$ (which are listed in decreasing order, see~\eqref{eqn-ordered-loops}) is the same as the law of $\{X_\gamma^j\}_{j\in\BB N}$ under the re-weighted law $(\tau_\gamma^{-1} / \BB E[\tau_\gamma^{-1}]) P_\gamma$. 
\end{thm}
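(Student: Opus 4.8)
The plan is to derive Theorem~\ref{thm-subcritical-lengths} by assembling three results from the literature — exactly as is done in \cite[Proposition 2.21]{as-cle-integrability} — rather than by a self-contained argument. The three ingredients are: (i) from \cite{msw-simple-cle-lqg}, the fact that exploring the nested CLE$_{\gamma^2}$ on a $\gamma$-LQG disk records the quantum boundary lengths $\nu_\gamma(\ell)$ of the loops as a self-similar growth-fragmentation process; (ii) from \cite{bbck-growth-frag}, the identification of the relevant growth-fragmentation together with its self-similarity index; and (iii) from \cite{ccm-cascade,bbck-growth-frag}, the explicit description of the first generation of such a growth-fragmentation in terms of the jumps of a spectrally positive $\beta$-stable L\'evy process run to a hitting time.

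Concretely, I would first recall from \cite{msw-simple-cle-lqg} that when the CLE$_{\gamma^2}$ on the $\gamma$-LQG disk of boundary length $\ell$ is explored via the branching SLE$_{\gamma^2}(\gamma^2-6)$ exploration parametrized by quantum natural time, the quantum lengths of the discovered loops — recorded together with their nesting structure — evolve as a self-similar growth-fragmentation whose self-similarity index is determined by $\gamma$ and whose associated spectrally positive stable L\'evy process has index $\beta = 4/\gamma^2 + 1/2$. In this description the non-nested (outermost) loops $\{\ell_\gamma^j\}_{j\in\BB N}$ are precisely the first-generation cells, and the conditional independence of Theorem~\ref{thm-subcritical-coupling} is the manifestation of the branching/self-similarity property of the growth-fragmentation.

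Next I would invoke the stable-process description of the first generation: by \cite{bbck-growth-frag,ccm-cascade}, the decreasing rearrangement of the first-generation cell sizes of this growth-fragmentation, started from a root cell of size $\ell$, has the same law as $\ell$ times the decreasing rearrangement of the jumps of $\zeta_\gamma$ on $[0,\tau_\gamma]$ under the re-weighted law $(\tau_\gamma^{-1}/\BB E[\tau_\gamma^{-1}])P_\gamma$, where $\tau_\gamma = \inf\{t\geq 0 : \zeta_\gamma(t) = -1\}$. Here the jumps of $\zeta_\gamma$ record the sizes of the offspring cells, the hitting time $\tau_\gamma$ encodes the lifetime of the (tagged) root cell, and the re-weighting by $\tau_\gamma^{-1}$ is the Radon-Nikodym correction between the normalization of the growth-fragmentation (fixed root-cell size) and the spectrally positive stable process run to hit $-1$. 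Combining this with our normalization — $\Phi_\gamma$ is a $\gamma$-LQG disk of boundary length $\ep = 2(2-\gamma)$ and all lengths are rescaled by $\ep^{-1}$ — together with scaling invariance of the stable process, yields precisely the asserted identity between the law of $\{\ep^{-1}\nu_\gamma(\ell_\gamma^j)\}_{j\in\BB N}$ and the law of $\{X_\gamma^j\}_{j\in\BB N}$ under $(\tau_\gamma^{-1}/\BB E[\tau_\gamma^{-1}])P_\gamma$.

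The main obstacle — which is handled in the cited references, and which I would therefore not reprove — is bridging the geometric and probabilistic pictures: establishing that the quantum loop lengths of CLE on LQG genuinely form a self-similar growth-fragmentation with the stated index (this uses the coupling of Theorem~\ref{thm-subcritical-coupling}, the Markovian structure of the SLE$_{\gamma^2}(\gamma^2-6)$ exploration, and properties of quantum natural time, as in \cite{msw-simple-cle-lqg}), and pinning down the $\tau_\gamma^{-1}$ re-weighting, which is the delicate point since it reflects the discrepancy between the natural (size-biased, or infinite) measure under which the stable-process description is cleanest and the fixed-boundary-length disk measure. Everything else — the $\ep^{-1}$ rescaling, the decreasing rearrangement, and the restriction to the first generation / outermost loops — is bookkeeping. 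Accordingly, in the present paper Theorem~\ref{thm-subcritical-lengths} should simply be cited from \cite[Proposition 2.21]{as-cle-integrability} and the references therein.
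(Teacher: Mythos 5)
Your proposal matches the paper's treatment: the paper does not reprove this statement but cites it as following from the combination of results in \cite{msw-simple-cle-lqg,ccm-cascade,bbck-growth-frag} (as explained at the end of \cite[Section 1]{ccm-cascade}, with details in Appendix A.2 of the first arXiv version of \cite{as-cle-integrability}), exactly as you propose, and your accounting for the boundary length $\ep$ and the $\ep^{-1}$ rescaling agrees with the paper's remark. No gap to report.
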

As explained at the end of~\cite[Section 1]{ccm-cascade}, Theorem~\ref{thm-subcritical-lengths} follows by combining results in~\cite{msw-simple-cle-lqg,ccm-cascade,bbck-growth-frag}. Further details are given in \cite[Section 4.3]{acsw-loop}.

Note that $(\BB D,\Phi_\gamma)$ is an LQG disk of boundary length $\ep$, rather than boundary length 1, which is the reason for the re-scaling by $\ep^{-1}$.

Theorem~\ref{thm-subcritical-lengths} directly implies the following.

\begin{lem} \label{lem-levy-conv}
	As $\gamma\to 2^-$, the joint law of the re-scaled loop lengths $\{\ep^{-1} \nu_\gamma(\ell_\gamma^j)\}_{j\in\BB N}$ converges to the joint law of $\{X^j\}_{j\in\BB N}$, where $\{X^j\}_{j\in\BB N}$ is sampled from the weighted law $(\tau^{-1} / \BB E[\tau^{-1}]) P$ as in Theorem~\ref{thm-critical-lengths}. 
\end{lem}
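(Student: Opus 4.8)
\textbf{Proof plan for Lemma~\ref{lem-levy-conv}.} The plan is to show that the weighted laws $(\tau_\gamma^{-1}/\BB E[\tau_\gamma^{-1}]) P_\gamma$ on jump sequences converge to $(\tau^{-1}/\BB E[\tau^{-1}]) P$ as $\gamma\to 2^-$, since Theorem~\ref{thm-subcritical-lengths} identifies the law of $\{\ep^{-1}\nu_\gamma(\ell_\gamma^j)\}_{j\in\BB N}$ with the former. The exponent satisfies $\beta(\gamma) = 4/\gamma^2 + 1/2 \to 4/4 + 1/2 = 3/2$ as $\gamma\to 2^-$, and the L\'evy measure $\BB 1_{(x>0)} x^{-\beta-1}\,dx$ converges (say, vaguely on $(0,\infty)$, and in the appropriate sense near $0$ after the usual truncation) to $\BB 1_{(x>0)} x^{-5/2}\,dx$. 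So the first step is to record that $\zeta_\gamma \to \zeta$ in law in the Skorokhod topology on $D([0,\infty))$; this is a standard fact about convergence of spectrally positive stable L\'evy processes when the stability index and the normalizing constant converge (it follows, e.g., from convergence of the Laplace exponents $\lambda \mapsto c_\beta \lambda^\beta$ together with tightness).

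The second step is to upgrade this to joint convergence of $(\zeta_\gamma, \tau_\gamma)$ and then of the ranked jump sequence. Here I would use that $-1$ is a level that $\zeta$ crosses by a jump or is continuously approached from above (for spectrally positive stable processes, the process hits any negative level continuously, so $\tau$ is a continuity point functional a.s.), hence $\tau_\gamma \to \tau$ in law jointly with $\zeta_\gamma\to\zeta$; passing to a Skorokhod coupling, $\zeta_\gamma\to\zeta$ uniformly on compacts and $\tau_\gamma\to\tau$ a.s. The ranked sequence of jumps on $[0,\tau_\gamma]$ is then a continuous functional of $(\zeta_\gamma,\tau_\gamma)$ at the limiting configuration (no jump of $\zeta$ occurs exactly at time $\tau$, and the jumps are a.s.\ all distinct), so $\{X_\gamma^j\}_{j\in\BB N} \to \{X^j\}_{j\in\BB N}$ a.s.\ in the product topology, hence in law under the unweighted measures $P_\gamma \to P$.

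The third step is to handle the Radon--Nikodym reweighting by $\tau_\gamma^{-1}/\BB E[\tau_\gamma^{-1}]$. I would argue: (i) $\tau_\gamma^{-1} \to \tau^{-1}$ a.s.\ in the coupling, since $\tau>0$ a.s.; (ii) $\BB E[\tau_\gamma^{-1}] \to \BB E[\tau^{-1}]$, which requires a uniform-integrability input — one can get explicit control of the negative moments of $\tau_\gamma$ (e.g., via the scaling $\tau_\gamma \eqD$ of the first passage time of a stable process to $-1$, whose density near $0$ and whose moments are classical and depend continuously on $\beta$; alternatively one quotes that $\BB E[\tau_\gamma^{-1}]$ is finite and continuous in $\gamma$ from the subcritical literature, cf.\ \cite{ccm-cascade,as-cle-integrability}); (iii) given (i) and (ii), for any bounded continuous $F$ of the ranked sequence, $\BB E_{P_\gamma}[\tau_\gamma^{-1} F] \to \BB E_P[\tau^{-1} F]$ by dominated/uniformly-integrable convergence, and dividing by the normalizers gives convergence of the reweighted laws. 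Combining with Theorem~\ref{thm-subcritical-lengths} gives the claim.

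\textbf{Main obstacle.} The delicate point is step three, specifically the uniform integrability needed for $\BB E[\tau_\gamma^{-1}] \to \BB E[\tau^{-1}]$ and for interchanging limit and expectation in the reweighting: a priori $\tau_\gamma$ can be small with non-negligible probability, so one needs a bound on $\BB P[\tau_\gamma < \delta]$ that is uniform in $\gamma$ near $2$ and integrable against $\delta^{-1}$ near $0$. This is where I would spend the most care, either extracting an explicit small-time estimate for the first-passage time of the spectrally positive $\beta$-stable process (uniformly over $\beta$ in a neighborhood of $3/2$) or invoking the relevant finiteness/continuity statements already established in the subcritical setting; the rest of the argument is routine weak-convergence bookkeeping.
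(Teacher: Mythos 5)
Your proposal is correct and follows essentially the same route as the paper, which disposes of the lemma in one line by combining Theorem~\ref{thm-subcritical-lengths} with standard convergence results for L\'evy processes (citing \cite[Theorem 13.17]{kallenberg}); your steps simply spell out the details that citation is meant to cover. The uniform-integrability point you flag for $\BB E[\tau_\gamma^{-1}]\to\BB E[\tau^{-1}]$ is real but routine: since $\zeta_\gamma$ is spectrally positive, $\tau_\gamma$ is a positive $(1/\beta)$-stable random variable whose negative moments are finite and depend continuously on $\beta$ near $3/2$, which gives the required uniform control.
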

\begin{proof}
	This follows from Theorem~\ref{thm-subcritical-lengths} and standard convergence results for L\'evy processes (see, e.g.,~\cite[Theorem 13.17]{kallenberg}).  
\end{proof}

To prove Theorem~\ref{thm-critical-lengths}, it remains to show that $\{\ep^{-1} \nu_\gamma(\ell_\gamma^j)\}_{j\in\BB N}$ also converges in law to $\{\nu_2(\ell_2^j)\}_{j\in\BB N}$. We will prove this using the convergence statement in Lemma~\ref{lem-weighting}. A key input is the following tightness statement. 

\begin{lem} \label{lem-index-tight}
	The random variables $\{J_\gamma\}_{\gamma \in (\sqrt{8/3},2)}$ as defined in~\eqref{eqn-marked-loop-def} are tight. 
\end{lem}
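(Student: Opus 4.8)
The plan is to show that the index $J_\gamma$ of the origin-surrounding loop cannot, with non-negligible probability, be arbitrarily large as $\gamma \to 2^-$. Recall that $J_\gamma = k$ means the origin lies in $U_\gamma^k$, the $k$th largest loop by $\nu_\gamma$-length. So the event $\{J_\gamma \geq k\}$ says the origin is surrounded by a loop which is \emph{not} among the $k-1$ loops of largest LQG length. I would control this by showing that $\{J_\gamma \geq k\}$ forces either a small quantum area inside the origin-surrounding loop, or the existence of many loops of comparable LQG length, and that both of these are unlikely uniformly in $\gamma$.

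Concretely, first I would invoke Lemma~\ref{lem-weighting} (or its subcritical analog, Lemma~\ref{lem-weighting-subcritical}, together with the convergence in Lemma~\ref{lem-in-out-conv}), which already gives joint convergence in law of $\left(\mcl S_\gamma^{J_\gamma}, \mcl A_\gamma, \ep^{-1}\nu_\gamma(\ell_\gamma^{J_\gamma})\right)$ as $\gamma\to 2^-$. In particular $\ep^{-1}\nu_\gamma(\ell_\gamma^{J_\gamma}) \to \nu_2(\ell_2^{J_2})$ in law, and the limit is an a.s.\ positive and finite random variable; hence the family $\{\ep^{-1}\nu_\gamma(\ell_\gamma^{J_\gamma})\}_\gamma$ is tight and, crucially, \emph{bounded below away from $0$ in probability}: for every $\delta > 0$ there is $a = a(\delta) > 0$ with $\BB P[\ep^{-1}\nu_\gamma(\ell_\gamma^{J_\gamma}) \geq a] \geq 1-\delta$ for all $\gamma$ close to $2$. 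Next I would use Lemma~\ref{lem-levy-conv}: the full ordered sequence $\{\ep^{-1}\nu_\gamma(\ell_\gamma^j)\}_{j\in\BB N}$ converges in law (in, say, the product topology, or $\ell^2$) to the jumps of the stable process re-weighted by $\tau^{-1}$, which is an a.s.\ summable sequence. This means that for every $a > 0$, the number of loops with $\ep^{-1}\nu_\gamma(\ell_\gamma^j) \geq a$, call it $N_\gamma(a)$, is tight as $\gamma\to 2^-$: for every $\delta > 0$ there is $K = K(a,\delta)$ with $\BB P[N_\gamma(a) \leq K] \geq 1-\delta$ for $\gamma$ near $2$. Combining the two displays: on the event $\{\ep^{-1}\nu_\gamma(\ell_\gamma^{J_\gamma}) \geq a\} \cap \{N_\gamma(a) \leq K\}$ we have $J_\gamma \leq K$, since $\ell_\gamma^{J_\gamma}$ is one of the at most $K$ loops of re-scaled length at least $a$. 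Hence $\BB P[J_\gamma \leq K] \geq 1 - 2\delta$ for $\gamma$ sufficiently close to $2$, and then one absorbs the finitely many remaining $\gamma$-values (say in a fixed compact subinterval of $(\sqrt{8/3},2)$, on which $J_\gamma$ is trivially tight since it is an a.s.\ finite random variable for each fixed $\gamma$) by enlarging $K$. This proves tightness of $\{J_\gamma\}_{\gamma\in(\sqrt{8/3},2)}$.

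The main obstacle, and the point requiring care, is the precise sense in which the ordered length sequences converge in Lemma~\ref{lem-levy-conv} and what it yields for the counting functions $N_\gamma(a)$: I want that for Lebesgue-a.e.\ $a > 0$ (those which are a.s.\ not an atom of the limiting sequence, equivalently $a$ with $\BB P[\exists j:\, X^j = a] = 0$, which holds for all but countably many $a$ by the continuity of the stable Lévy measure) one has $N_\gamma(a) \to N(a)$ in law, where $N(a) = \#\{j : X^j \geq a\} < \infty$ a.s. This is a standard consequence of convergence in the product topology on sequences together with the a.s.\ summability of the limit (so that no mass escapes to infinitely many small jumps), but it should be stated explicitly. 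Everything else — the lower bound on $\ep^{-1}\nu_\gamma(\ell_\gamma^{J_\gamma})$ from Lemma~\ref{lem-weighting}, and the deterministic implication $\{\ep^{-1}\nu_\gamma(\ell_\gamma^{J_\gamma})\geq a,\ N_\gamma(a)\leq K\}\subset\{J_\gamma\leq K\}$ — is routine. One could alternatively bypass Lemma~\ref{lem-levy-conv} and argue directly via Lemma~\ref{lem-disk-conv}, bounding $\#\{j : \ep^{-1}\nu_\gamma(\ell_\gamma^j) \geq a\}$ by $(\ep^{-1}\mu_\gamma(\BB D)) / (\inf_j \ep^{-1}\mu_\gamma(U_\gamma^j)\,\BB 1_{\ep^{-1}\nu_\gamma(\ell_\gamma^j)\geq a})$ using that a loop of large LQG length encloses macroscopic LQG area with good probability (again from Theorem~\ref{thm-subcritical-coupling} and Lemma~\ref{lem-disk-conv}); but the route through Lemma~\ref{lem-levy-conv} is cleaner since that lemma is already proved.
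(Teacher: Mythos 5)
Your argument is correct, and its core skeleton matches the paper's: both proofs first use Lemma~\ref{lem-weighting} to get that $\ep^{-1}\nu_\gamma(\ell_\gamma^{J_\gamma})$ converges in law to the a.s.\ positive limit $\nu_2(\ell_2^{J_2})$, hence is bounded below in probability by some $a>0$ uniformly as $\gamma\to 2^-$, and then bound $J_\gamma$ by the number $N_\gamma(a)$ of loops whose re-scaled length exceeds $a$. Where you differ is in how tightness of this count is obtained. The paper proves it directly: by Theorem~\ref{thm-subcritical-coupling} the surfaces inside the loops are conditionally i.i.d.\ $\gamma$-LQG disks given their boundary lengths, so each loop of re-scaled length at least $a$ encloses re-scaled LQG area of order $a^2$ with probability at least $3/4$ (Lemma~\ref{lem-disk-conv}); a binomial concentration bound then forces $\ep^{-1}\mu_\gamma(\BB D)\gtrsim a^2 N_\gamma(a)$ with high probability, and tightness of the total area gives tightness of $N_\gamma(a)$ --- essentially a rerun of the proof of Lemma~\ref{lem-length-tight}. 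You instead read tightness of $N_\gamma(a)$ off Lemma~\ref{lem-levy-conv}: since the ordered sequence is decreasing, $\BB P[N_\gamma(a)>K]=\BB P[\ep^{-1}\nu_\gamma(\ell_\gamma^{K+1})\geq a]$, which for $a$ a continuity point converges to $\BB P[X^{K+1}\geq a]$, and this is small for large $K$ because only finitely many jumps of the stable process exceed any fixed level; your care about atoms of the limit law is the right point to flag, and your argument for it is sound. This is a legitimate and shorter route with no circularity, since Lemma~\ref{lem-levy-conv} rests only on the subcritical Theorem~\ref{thm-subcritical-lengths}; indeed the paper's own footnote to Lemma~\ref{lem-inverse-tight} acknowledges exactly this kind of shortcut, and since Lemma~\ref{lem-index-tight} is used only in the proof of Theorem~\ref{thm-critical-lengths}, which invokes Theorem~\ref{thm-subcritical-lengths} anyway, nothing is lost; what the paper's longer argument buys is a counting bound that does not lean on the external stable-process input. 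One small caveat: your parenthetical that $\{J_\gamma\}$ is ``trivially tight'' on a compact subinterval of $(\sqrt{8/3},2)$ because each $J_\gamma$ is a.s.\ finite does not by itself give uniformity over a continuum of $\gamma$-values; but the paper is equally silent on this regime, and only the behavior as $\gamma\to 2^-$ is used, so this is a shared informality rather than a gap in your proof.
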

\begin{proof}
	Fix $\delta >0$. For $\gamma \in (\sqrt{8/3},2)$, let $N_\gamma$ be the number of loops of $\Gamma_\gamma$ with $\nu_\gamma$-length at least $\ep \delta$. We first use a similar argument to the proof of Lemma~\ref{lem-length-tight} to show that the random variable $N_\gamma$ is tight. 
	
	By the definition of the $\ell_\gamma^j$s, we have $\nu_\gamma(\ell_\gamma^j) \geq \ep\delta$ for each $j=1,\dots,N_\gamma$. Let $\mcl F_\gamma$ be the $\sigma$-algebra generated by $\{\nu_\gamma(\ell_\gamma^j)\}_{j\in\BB N}$. By Theorem~\ref{thm-subcritical-coupling}, under the conditional law given $\mcl F_\gamma$, the random variables $\ep^2 \mu_\gamma(U_\gamma^j) / [\nu_\gamma(\ell_\gamma^j)]^2$ for $j\in\BB N$ are i.i.d.\ and each has the law of the $\gamma$-LQG area of a $\gamma$-LQG disk of boundary length $\ep$. By Lemma~\ref{lem-disk-conv}, this LQG area scaled by $\ep^{-1}$ converges in distribution to an a.s.\ positive random variable. Hence there is a universal constant $c > 0$ such that for each $j \in \BB N$, 
	\eqbn
	\BB P\left[ \mu_\gamma(U_\gamma^j)  \geq c \ep^{-1} [\nu_\gamma(\ell_\gamma^j)]^2 \,|\, \mcl F_\gamma\right] \geq \frac34 .
	\eqen
	By comparison to a binomial random variable, we obtain
	\eqbn
	\BB P\left[ \#\left\{j \in [1,N_\gamma]\cap\BB Z  : \mu_\gamma(U_\gamma^j)  \geq c \ep \delta^2 \right\} \geq \frac12 N_\gamma \,|\, \mcl F_\gamma\right] \geq 1 - 2 e^{-C N_\gamma} 
	\eqen
	where $C  > 0$ is a universal constant. Since $\mu_\gamma(\BB D) = \sum_{j=1}^\infty \mu_\gamma(U_\gamma^j)$, we get
	\eqb \label{eqn-big-total-mass}
	\BB P\left[   \mu_\gamma(\BB D)  \geq \frac12 c \ep \delta^2  N_\gamma  \,|\, \mcl F_\gamma\right] \geq 1 - 2 e^{-C N_\gamma} . 
	\eqe 
	Since $\ep^{-1} \mu_\gamma(\BB D)$ is a tight random variable by Lemma~\ref{lem-disk-conv}, we deduce from~\eqref{eqn-big-total-mass} that $N_\gamma$ is also a tight random variable.
	
	By Lemma~\ref{lem-weighting}, we see that $\ep^{-1} \nu_\gamma(\ell_\gamma^{J_\gamma})$ converges in distribution to the a.s.\ positive random variable $\nu_2(\ell_2^{J_2})$. Hence, when $\delta > 0$ is small it holds with high probability (uniformly in $\gamma$) that $\nu_\gamma(\ell_\gamma^{J_\gamma}) \geq \ep\delta$, and hence $J_\gamma \leq N_\gamma$. So, the tightness of $N_\gamma$ implies the tightness of $J_\gamma$.
\end{proof}

In order to deduce the convergence of the loop lengths $\{\nu_\gamma( \ell_\gamma^j)\}_{j\in\BB N}$ from the convergence of the single loop length $\nu_\gamma(\ell^{J_\gamma})$, we consider countably many copies of $J_\gamma$. 
For $\gamma \in (\sqrt{8/3},2]$, condition on $(\Phi_\gamma,\Gamma_\gamma)$ and let $\{Z_\gamma(m)\}_{m\in\BB N}$ be conditionally i.i.d.\ samples from $\mu_\gamma$, normalized to be a probability measure. For $m\in\BB N$, let $J_\gamma(m)$ be chosen so that 
\eqb \label{eqn-infinite-pts-def} 
Z_\gamma(m) \in U_\gamma^{J_\gamma(m)} . 
\eqe 
Note that $(\Phi_\gamma,\Gamma_\gamma,J_\gamma(m)) \eqD (\Phi_\gamma,\Gamma_\gamma,J_\gamma)$ for each $m\in\BB N$. 

\begin{lem} \label{lem-infinite-marked-pts}
	With $J_\gamma(m)$ defined as in~\eqref{eqn-infinite-pts-def}, we have the convergence of joint laws
	\eqb \label{eqn-infinite-marked-pts}
	\{ \ep^{-1} \nu_\gamma(\ell_\gamma^{J_\gamma(m )}) \}_{m\in\BB N}  \to  \{\nu_2( \ell_2^{J_2(m )} ) \}_{m\in\BB N}    .  
	\eqe  
\end{lem}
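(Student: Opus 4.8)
The plan is to bootstrap from the single-marked-point statement of Lemma~\ref{lem-weighting}, using the exchangeability of the points $\{Z_\gamma(m)\}_{m\in\BB N}$ together with a tightness-plus-unique-subsequential-limit argument. Since $\BB R^{\BB N}$ carries the product topology, coordinatewise tightness of $\big(\ep^{-1}\nu_\gamma(\ell_\gamma^{J_\gamma(m)})\big)_{m\in\BB N}$ — which follows from $\nu_\gamma(\ell_\gamma^{J_\gamma(m)})\leq\nu_\gamma(\ell_\gamma^1)$, the identity $(\Phi_\gamma,\Gamma_\gamma,Z_\gamma(m))\eqD(\Phi_\gamma,\Gamma_\gamma,0)$, and Lemma~\ref{lem-length-tight} — reduces the claim to showing that every subsequential limit in law of this sequence equals $(\nu_2(\ell_2^{J_2(m)}))_{m\in\BB N}$.

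First I would build a good coupling. Couple the loop ensembles $\{\Gamma_\gamma\}_{\gamma\in(\sqrt{8/3},2]}$ via the Brownian loop soup as in the proof of Lemma~\ref{lem-cle-conv}, so that a.s.\ the gaskets converge in the Hausdorff distance and the maximal diameter of the clusters of the low-intensity soups $\mcl B_2\setminus\mcl B_\gamma$ tends to $0$ as $\gamma\to2^-$; as in that proof, this forces that for \emph{any} point $z$ in the open region of a $\Gamma_2$-loop, the $\Gamma_\gamma$-loop surrounding $z$ converges in the Hausdorff distance to the $\Gamma_2$-loop surrounding $z$, with enclosed region increasing up to it. Independently, use the Skorokhod theorem and Lemma~\ref{lem-disk-conv} to make $(\Phi_\gamma,\ep^{-1}\mu_\gamma,\ep^{-1}\nu_\gamma)\to(\Phi_2,\mu_2,\nu_2)$ a.s., and then, conditionally on all of this, sample the marked points so that the $\{Z_\gamma(m)\}_{m\in\BB N}$ are conditionally i.i.d.\ from $\mu_\gamma/\mu_\gamma(\BB D)$ and, along a subsequence, $Z_\gamma(m)\to Z_2(m)$ a.s.\ for every $m$ — possible since the normalized area measures converge weakly a.s. Since $Z_2(m)$ is independent of $\Gamma_2$ and a fixed point a.s.\ lies off the CLE$_4$ gasket, a.s.\ each $Z_2(m)$ lies in some open region $U_2^{J_2(m)}$ bounded by a $\Gamma_2$-loop $\ell_2^{J_2(m)}$; combined with $Z_\gamma(m)\to Z_2(m)$, for $\gamma$ near $2$ the $\Gamma_\gamma$-loop around $Z_\gamma(m)$ coincides with that around $Z_2(m)$, so $\ell_\gamma^{J_\gamma(m)}\to\ell_2^{J_2(m)}$ in the Hausdorff distance and $U_\gamma^{J_\gamma(m)}\uparrow U_2^{J_2(m)}$. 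Repeating the argument of Lemma~\ref{lem-in-out-conv} (convergence of the uniformizing maps and of the pushed-forward and pulled-back measures and gaskets) then yields, for each $m$, $\mcl S_\gamma^{J_\gamma(m)}\to\mcl S_2^{J_2(m)}$ and $\mcl A_\gamma^{(m)}\to\mcl A_2^{(m)}$ a.s.\ in the topology of Section~\ref{sec-annulus-metric}, where $\mcl A_\gamma^{(m)}$ is the set-decorated annular surface outside $\ell_\gamma^{J_\gamma(m)}$, defined as in~\eqref{eqn-outer-annulus}.

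Then I would identify the subsequential limits of the lengths. Pass to a further subsequence along which $\ep^{-1}\nu_\gamma(\ell_\gamma^{J_\gamma(m)})\to L(m)$ a.s.\ for every $m$. Fix $m$. By exchangeability the triple $\big(\mcl S_\gamma^{J_\gamma(m)},\mcl A_\gamma^{(m)},\ep^{-1}\nu_\gamma(\ell_\gamma^{J_\gamma(m)})\big)$ has the same law as its $J_\gamma$-analogue, so by Lemma~\ref{lem-weighting} it converges in law to $\big(\mcl S_2^{J_2},\mcl A_2,\nu_2(\ell_2^{J_2})\big)$, where $\nu_2(\ell_2^{J_2})$ is the boundary length of $\mcl S_2^{J_2}$, i.e.\ a fixed measurable functional $g$ of that surface. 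In the coupling above the same triple converges a.s., hence in law, to $\big(\mcl S_2^{J_2(m)},\mcl A_2^{(m)},L(m)\big)$, and $\mcl S_2^{J_2(m)}\eqD\mcl S_2^{J_2}$; equating the two limit laws forces $L(m)=g(\mcl S_2^{J_2(m)})=\nu_2(\ell_2^{J_2(m)})$ a.s. Since, by Lemmas~\ref{lem-disk-conv} and~\ref{lem-cle-conv}, $(\Phi_2,\Gamma_2,\{Z_2(m)\}_{m\in\BB N})$ has the law prescribed in the $\gamma=2$ case — a critical LQG disk together with an independent CLE$_4$ and conditionally i.i.d.\ LQG-area samples — the a.s.\ limit $(L(m))_{m\in\BB N}$ equals $(\nu_2(\ell_2^{J_2(m)}))_{m\in\BB N}$ computed from this data; as the initial subsequence was arbitrary, \eqref{eqn-infinite-marked-pts} follows.

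The main obstacle is the identification of $L(m)$: the $\gamma$-LQG length of an interior CLE loop is not a continuous — nor even obviously limit-measurable — functional of the field, the loop ensemble, and the marked point (this is the difficulty flagged just before Lemma~\ref{lem-length-tight}), so it cannot be pinned down by a direct continuity argument and one must route through the conditional-law description in Lemma~\ref{lem-weighting}; this is precisely why exchangeability of the marked points is essential. The remaining points — making the loop-soup coupling track the loop around each \emph{moving} point $Z_\gamma(m)$ rather than only a fixed point, carrying out the conditional Skorokhod coupling of the i.i.d.\ marked points, and checking that total LQG masses converge with no mass escaping to $\bdy\BB D$ or onto the limiting loop so that the conditional laws pass to the limit — are routine but require some care.
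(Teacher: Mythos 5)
Your proposal is correct and rests on the same essential ingredients as the paper's proof: the convergence part of Lemma~\ref{lem-weighting} applied to the loop surrounding each $Z_\gamma(m)$ via exchangeability of the $\mu_\gamma$-sampled points, plus a measurability argument to pin down the limiting lengths. The difference is in how the per-point statement is stitched into a joint one. The paper stays entirely at the level of joint laws: for each fixed $m_0$ it records the convergence of the tuple $(\Phi_\gamma,\ep^{-1}\mu_\gamma,\ep^{-1}\nu_\gamma,\{Z_\gamma(m)\}_m,\ell_\gamma^{J_\gamma(m_0)},\ol G_\gamma,\ep^{-1}\nu_\gamma(\ell_\gamma^{J_\gamma(m_0)}))$, and then observes that $\ell_2^{J_2(m_0)}$ and $\nu_2(\ell_2^{J_2(m_0)})$ are measurable functions of the common limit data $(\Phi_2,\{Z_2(m)\}_m,\ol G_2)$; this measurability alone upgrades the marginal-in-$m_0$ convergence to joint convergence over all $m$, with no explicit coupling, no subsequential-limit extraction, and no per-$m$ re-run of the loop-soup argument or of Lemma~\ref{lem-in-out-conv}. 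You instead build a Skorokhod/loop-soup coupling with moving marked points, prove a.s.\ convergence of the surfaces $\mcl S_\gamma^{J_\gamma(m)}$ and $\mcl A_\gamma^{(m)}$ for each $m$, and identify the subsequential limits $L(m)$ by equating laws with the Lemma~\ref{lem-weighting} limit and reading off the boundary length as a measurable functional of the enclosed surface --- a legitimate alternative pinning-down device (of the same nature as the identification $X=\nu_2(\ell_2^{J_2})$ inside the proof of Lemma~\ref{lem-weighting}), but one that costs you the extra coupling machinery the paper avoids. One small caution: the identity you quote, $(\Phi_\gamma,\Gamma_\gamma,Z_\gamma(m))\eqD(\Phi_\gamma,\Gamma_\gamma,0)$, is not literally correct as stated (given the embedded field and the CLE, the origin's loop index is deterministic while $J_\gamma(m)$ is not); what is true, and what you actually use in the identification step, is the equality in law of the embedding-invariant triples $(\mcl S_\gamma^{J_\gamma(m)},\mcl A_\gamma^{(m)},\ep^{-1}\nu_\gamma(\ell_\gamma^{J_\gamma(m)}))$ and $(\mcl S_\gamma^{J_\gamma},\mcl A_\gamma,\ep^{-1}\nu_\gamma(\ell_\gamma^{J_\gamma}))$, so this does not affect the validity of your argument (and for the tightness step the crude bound by $\ep^{-1}\nu_\gamma(\ell_\gamma^1)$ already suffices).
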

\begin{proof} 
	By Lemma~\ref{lem-disk-conv}, we have the convergence of joint laws
	\eqbn
	\left( \Phi_\gamma , \ep^{-1} \mu_\gamma , \ep^{-1} \nu_\gamma ,  \{Z_\gamma(m)\}_{m\in\BB N}  \right) 
	\to \left( \Phi_2  ,   \mu_2 ,  \nu_2 ,   \{Z_2(m)\}_{m\in\BB N}  \right)  .
	\eqen
	By combining this with the convergence part of Lemma~\ref{lem-weighting}, we get for each fixed $m_0 \in\BB N$ the convergence of joint laws
	\allb \label{eqn-infinite-marked-loops0}
	&\left( \Phi_\gamma , \ep^{-1} \mu_\gamma , \ep^{-1} \nu_\gamma , \{Z_\gamma(m)\}_{m\in\BB N} , \ell_\gamma^{J_\gamma(m_0)} , \ol G_\gamma  , \ep^{-1} \nu_\gamma(\ell_\gamma^{J_\gamma(m_0)}) \right) \notag\\  
	&\qquad \to \left( \Phi_2  ,  \mu_2 ,   \nu_2 ,    \{Z_2(m)\}_{m\in\BB N}  ,  \ell_2^{J_2(m_0)} , \ol G_2  , \nu_2(\ell_2^{J_2(m_0)}) \right)   
	\alle
	where the fifth and sixth components are equipped with the Hausdorff distance. 
	Since $\ell_2^{J_2(m_0)}$ and $\nu_2(\ell_2^{J_2(m_0)})$ are measurable functions of $\Phi_2$, $\{Z_2(m)\}_{m\in\BB N}$, and the CLE$_4$ gasket $\ol G_2$, we see that the convergence~\eqref{eqn-infinite-marked-loops0} implies that in fact we have the convergence of joint laws 
	\allb \label{eqn-infinite-marked-loops}
	&\left( \Phi_\gamma , \ep^{-1} \mu_\gamma , \ep^{-1} \nu_\gamma , \{Z_\gamma(m)\}_{m\in\BB N} , \{ \ell_\gamma^{J_\gamma(m )} \}_{m\in\BB N} , \ol G_\gamma  ,  \{ \ep^{-1} \nu_\gamma(\ell_\gamma^{J_\gamma(m )}) \}_{m\in\BB N} \right) \notag\\  
	&\qquad \to \left( \Phi_2  ,   \mu_2 ,   \nu_2 ,    \{Z_2(m)\}_{m\in\BB N}  ,  \{\ell_2^{J_2(m )} \}_{m\in\BB N} , \ol G_2  , \{\nu_2( \ell_2^{J_2(m )} ) \}_{m\in\BB N} \right)  .  
	\alle  
	This in particular implies~\eqref{eqn-infinite-marked-pts}.
\end{proof}

For $\gamma\in (\sqrt{8/3},2]$, define the points $Z_\gamma(m)$ and the indices $J_\gamma(m)$ as in~\eqref{eqn-infinite-pts-def}. For $j\in \BB N$, let 
\eqb \label{eqn-inverse-index}
M_\gamma^j := \min\left\{ m \in \BB N : Z_\gamma(m) \in U_\gamma^j\right\} 
= \min\left\{m\in\BB N : J_\gamma(m) = j \right\} .
\eqe

\begin{lem} \label{lem-inverse-tight}
	For each fixed $j\in \BB N$, the laws of the random variables $\{M_\gamma^j\}_{\gamma\in (\sqrt{8/3},2)}$ are tight. 
\end{lem}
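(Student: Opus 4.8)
The plan is to show that $M_\gamma^j$ is tight by bounding from below the conditional probability that a single sample $Z_\gamma(m)$ falls in $U_\gamma^j$, uniformly in $\gamma$; once we know this conditional probability is bounded below by some a.s.-positive random variable $p_\gamma$ whose law is tight away from $0$, the geometric-type tail of $M_\gamma^j$ gives tightness. Concretely, conditionally on $(\Phi_\gamma,\Gamma_\gamma)$ the points $\{Z_\gamma(m)\}_{m\in\BB N}$ are i.i.d.\ samples from $\mu_\gamma/\mu_\gamma(\BB D)$, so
\eqbn
\BB P\left[ M_\gamma^j > m \,\middle|\, \Phi_\gamma , \Gamma_\gamma \right] = \left( 1 - \frac{\mu_\gamma(U_\gamma^j)}{\mu_\gamma(\BB D)} \right)^m .
\eqen
Hence it suffices to show that the random variable $\mu_\gamma(U_\gamma^j)/\mu_\gamma(\BB D)$ is, uniformly in $\gamma\in(\sqrt{8/3},2)$, bounded below by a positive quantity with high probability; i.e.\ that for every $\delta>0$ there is $q>0$ with $\BB P[\mu_\gamma(U_\gamma^j)/\mu_\gamma(\BB D) \geq q] \geq 1-\delta$ for all $\gamma$.

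First I would control the numerator from below. By~\eqref{eqn-ordered-loops} the loops are ordered by $\nu_\gamma$-length, so $\nu_\gamma(\ell_\gamma^j) \leq \nu_\gamma(\ell_\gamma^1)$; more usefully, I claim $\ep^{-1}\nu_\gamma(\ell_\gamma^j)$ is tight and also bounded below in probability away from $0$ uniformly in $\gamma$. Tightness from above is in the spirit of Lemma~\ref{lem-length-tight}. For the lower bound, one can argue as in Lemma~\ref{lem-index-tight}: the number $N_\gamma$ of loops with $\nu_\gamma$-length at least $\ep\delta$ is tight, so for small $\delta$ it holds with probability close to $1$ (uniformly in $\gamma$) that $N_\gamma < j$ is false — wait, that's the wrong direction; instead, I would use that by Theorem~\ref{thm-subcritical-coupling} and Lemma~\ref{lem-disk-conv}, conditionally on $\mcl F_\gamma = \sigma(\{\nu_\gamma(\ell_\gamma^i)\}_{i})$ the rescaled areas $\ep^2\mu_\gamma(U_\gamma^i)/[\nu_\gamma(\ell_\gamma^i)]^2$ are i.i.d.\ and converge in law to an a.s.-positive limit. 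Thus there is a universal $c>0$ with $\BB P[\mu_\gamma(U_\gamma^j) \geq c\ep^{-1}[\nu_\gamma(\ell_\gamma^j)]^2 \mid \mcl F_\gamma] \geq 3/4$, which combined with a lower tail bound on $\ep^{-1}\nu_\gamma(\ell_\gamma^j)$ gives $\mu_\gamma(U_\gamma^j) \geq c' \ep$ with uniformly-high probability for a suitable $c'>0$ depending on the target probability. For the lower bound on $\ep^{-1}\nu_\gamma(\ell_\gamma^j)$, note that $\nu_\gamma(\ell_\gamma^j) \geq \ep\delta$ iff $N_\gamma \geq j$; and $\BB P[N_\gamma \geq j]$ is bounded below uniformly in $\gamma$ for $\delta$ small — this follows since, by Lemma~\ref{lem-levy-conv}, the joint law of $\{\ep^{-1}\nu_\gamma(\ell_\gamma^i)\}_{i}$ converges (to a limit with infinitely many positive entries a.s.), so $\BB P[\ep^{-1}\nu_\gamma(\ell_\gamma^j) > \delta]$ is close to $1$ when $\delta$ is small, uniformly in $\gamma$.

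For the denominator, $\ep^{-1}\mu_\gamma(\BB D)$ is tight by Lemma~\ref{lem-disk-conv}, so for every $\delta>0$ there is $M<\infty$ with $\BB P[\ep^{-1}\mu_\gamma(\BB D) \leq M] \geq 1-\delta$ uniformly in $\gamma$. Combining the two bounds: on the intersection of the relevant high-probability events we have $\mu_\gamma(U_\gamma^j)/\mu_\gamma(\BB D) \geq (c'\ep)/(M\ep) = c'/M =: q > 0$, and this intersection has probability at least $1 - O(\delta)$ uniformly in $\gamma$. Plugging $q$ into the geometric bound gives $\BB P[M_\gamma^j > m] \leq \BB E[(1-q)^m \BB 1_{E_\gamma}] + \BB P[E_\gamma^c] \leq (1-q)^m + O(\delta)$, where $E_\gamma$ is the good event; sending $m\to\infty$ and then $\delta\to 0$ yields tightness of $\{M_\gamma^j\}_\gamma$. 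The main obstacle is the uniform-in-$\gamma$ lower bound on $\ep^{-1}\nu_\gamma(\ell_\gamma^j)$ away from $0$: this is where one genuinely needs the convergence of the full loop-length vector (Lemma~\ref{lem-levy-conv}) rather than just tightness, since a priori the $j$-th largest length could degenerate to $0$ along a subsequence; the Lévy-process description rules this out because the $3/2$-stable excursion has infinitely many jumps of every scale.
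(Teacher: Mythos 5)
Your proposal is correct and takes essentially the same route as the paper: the conditional law of $M_\gamma^j$ given $(\Phi_\gamma,\Gamma_\gamma)$ is geometric with success probability $\mu_\gamma(U_\gamma^j)/\mu_\gamma(\BB D)$, the denominator is handled by the tightness of $\ep^{-1}\mu_\gamma(\BB D)$ from Lemma~\ref{lem-disk-conv}, and the numerator is bounded below by combining the lower bound on $\ep^{-1}\nu_\gamma(\ell_\gamma^j)$ coming from Lemma~\ref{lem-levy-conv} (the $j$-th largest jump of the stable excursion is a.s.\ positive) with the area-versus-boundary-length estimate from the proof of Lemma~\ref{lem-length-tight}, exactly as in \eqref{eqn-big-area-pos}. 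The only small adjustment is to invoke that estimate in the form \eqref{eqn-big-area-pos0}, with probability $p$ arbitrarily close to $1$ and constant $C_p$ depending on $p$, rather than with the fixed constant $3/4$, so that your good event really has probability $1-O(\delta)$ uniformly in $\gamma$.
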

\begin{proof}
	Conditional on $(\Phi_\gamma,\Gamma_\gamma)$, the random variables $\{Z_\gamma(m)\}_{m\in\BB N}$ are i.i.d.\ samples from $\mu_\gamma$. Hence, the conditional law of $M_\gamma^j$ given $(\Phi_\gamma,\Gamma_\gamma)$ is geometric with success probability $ \mu_\gamma(U_\gamma^j) / \mu_\gamma(\BB D)$. We know that $\ep^{-1} \mu_\gamma(\BB D)$ is a tight random variable by Lemma~\ref{lem-disk-conv}. So, we just need to show that $[ \ep^{-1} \mu_\gamma(U_\gamma^j) ]^{-1}$ is tight. 
	From Lemma~\ref{lem-levy-conv}, we know that $[\ep^{-1} \nu_\gamma(\ell_\gamma^j)]^{-1}$ is tight. 
	The tightness of $[ \ep^{-1} \mu_\gamma(U_\gamma^j) ]^{-1}$ follows from this and the proof of Lemma~\ref{lem-length-tight} (see~\eqref{eqn-big-area-pos}).\footnote{We could have alternatively deduced Lemma~\ref{lem-length-tight} from Lemma~\ref{lem-levy-conv}, but we chose to give a direct proof of Lemma~\ref{lem-length-tight} in order to make the proof of Theorem~\ref{thm-critical-cle4} more self-contained.}
\end{proof}

The following is the main input in the proof of Theorem~\ref{thm-critical-lengths}. 

\begin{prop} \label{prop-ordered-loop-conv}
	We have the convergence of joint laws
	\eqb
	\{ \ep^{-1} \nu_\gamma(\ell_\gamma^j)\}_{j\in\BB N}  \to \{\nu_2(\ell_2^j)\}_{j\in\BB N} .
	\eqe
\end{prop}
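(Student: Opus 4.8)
The plan is to reduce to finite-dimensional marginals and then identify subsequential limits using the marked-point machinery behind Lemma~\ref{lem-infinite-marked-pts}. It suffices to prove that for each fixed $n\in\BB N$ the vector $(\ep^{-1}\nu_\gamma(\ell_\gamma^1),\dots,\ep^{-1}\nu_\gamma(\ell_\gamma^n))$ converges in law to $(\nu_2(\ell_2^1),\dots,\nu_2(\ell_2^n))$ as $\gamma\to 2^-$; together with the tightness provided by Lemmas~\ref{lem-length-tight} and~\ref{lem-index-tight} this yields the convergence of joint laws. Since every coordinate of this vector is dominated by $\ep^{-1}\nu_\gamma(\ell_\gamma^1)$, which is tight by Lemma~\ref{lem-length-tight}, the vector is tight in $\BB R^n$, so by Prokhorov's theorem it is enough to show that any subsequential limit $(L^1,\dots,L^n)$, say along $\gamma_k\uparrow 2$, has the law of $(\nu_2(\ell_2^j))_{j\le n}$.

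To this end I would fix one coupling. Using the nested Brownian loop soup coupling of $\{\Gamma_\gamma\}$ from the proof of Lemma~\ref{lem-cle-conv}, the convergence of Lemma~\ref{lem-disk-conv}, the joint convergence~\eqref{eqn-infinite-marked-loops} of Lemma~\ref{lem-infinite-marked-pts}, the random variables $M_\gamma^j$ of~\eqref{eqn-inverse-index} (which are tight by Lemma~\ref{lem-inverse-tight}), and the Skorokhod theorem, I would pass to a further subsequence and couple all the objects (for $\gamma=\gamma_k$ and $\gamma=2$) so that, almost surely and simultaneously: $Z_{\gamma_k}(m)\to Z_2(m)$ for every $m$; $\ell_{\gamma_k}^{J_{\gamma_k}(m)}\to\ell_2^{J_2(m)}$ in the Hausdorff distance and $\ep^{-1}\nu_{\gamma_k}(\ell_{\gamma_k}^{J_{\gamma_k}(m)})\to\nu_2(\ell_2^{J_2(m)})$ for every $m$; $(\ep^{-1}\nu_{\gamma_k}(\ell_{\gamma_k}^j))_{j\le n}\to(L^1,\dots,L^n)$; and, for each $j\le n$, $M_{\gamma_k}^j$ is eventually equal to a finite constant $m^{(j)}$ (this uses that $M_{\gamma_k}^j$ is $\BB N$-valued and tight). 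I would also note that every loop of $\Gamma_2$ is of the form $\ell_2^{J_2(m)}$ for some $m$, since every CLE$_4$ loop encloses positive LQG area, so a.s.\ $M_2^j<\infty$ for all $j$.

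In this coupling the lower bound $L^r\ge\nu_2(\ell_2^r)$ for $r\le n$ is easy: taking $m_i:=M_2^i$ for $i=1,\dots,r$, the loops $\ell_2^{J_2(m_1)}=\ell_2^1,\dots,\ell_2^{J_2(m_r)}=\ell_2^r$ are pairwise distinct, hence at positive Hausdorff distance, so for $k$ large $\ell_{\gamma_k}^{J_{\gamma_k}(m_1)},\dots,\ell_{\gamma_k}^{J_{\gamma_k}(m_r)}$ are $r$ distinct loops of $\Gamma_{\gamma_k}$; the $r$-th largest quantum length dominates the smallest quantum length among any $r$ distinct loops, so $\ep^{-1}\nu_{\gamma_k}(\ell_{\gamma_k}^r)\ge\min_{i\le r}\ep^{-1}\nu_{\gamma_k}(\ell_{\gamma_k}^{J_{\gamma_k}(m_i)})$, and letting $k\to\infty$ gives $L^r\ge\min_{i\le r}\nu_2(\ell_2^i)=\nu_2(\ell_2^r)$. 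For the matching upper bound, observe that for $k$ large $\ell_{\gamma_k}^r=\ell_{\gamma_k}^{J_{\gamma_k}(M_{\gamma_k}^r)}=\ell_{\gamma_k}^{J_{\gamma_k}(m^{(r)})}$, so $L^r=\nu_2(\ell_2^{J_2(m^{(r)})})$; granting that $\ell_2^{J_2(m^{(1)})},\dots,\ell_2^{J_2(m^{(n)})}$ are pairwise distinct, then for each $r$ these give $r$ distinct loops of $\Gamma_2$ of quantum length $\ge L^r$ (recall $L^1\ge\cdots\ge L^n$), whence $\nu_2(\ell_2^r)\ge L^r$. Combining the two bounds gives $L^r=\nu_2(\ell_2^r)$ for all $r\le n$, which is what we want.

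The crux — and the step I expect to be the main obstacle — is the ``no merging'' assertion that $\ell_2^{J_2(m^{(1)})},\dots,\ell_2^{J_2(m^{(n)})}$ are pairwise distinct, i.e.\ that distinct large loops of $\Gamma_{\gamma_k}$ cannot converge to a common loop of CLE$_4$. This does not follow from convergence in law and genuinely uses the loop soup coupling. I would prove it by the same argument that underlies Lemma~\ref{lem-cle-conv}, applied with the origin replaced by each marked point $Z_\gamma(m)$: almost surely, for every $m$ and every $\gamma$ sufficiently close to $2$, the loop of $\Gamma_\gamma$ surrounding $Z_\gamma(m)$ is the unique outermost loop of $\Gamma_\gamma$ whose enclosed region is contained in $\ol{U_2^{J_2(m)}}$, and that region increases to $U_2^{J_2(m)}$ as $\gamma\to2^-$; in particular this loop depends only on $J_2(m)$. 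Hence if $J_2(m^{(r)})=J_2(m^{(r')})$ then $\ell_{\gamma_k}^r$ and $\ell_{\gamma_k}^{r'}$ coincide for $k$ large, contradicting $r\ne r'$. Making the ``unique outermost loop'' and ``region increases'' claims precise — using planarity and the fact (established inside the proof of Lemma~\ref{lem-cle-conv}) that the loops of $\Gamma_\gamma$ lying inside $U_2^j$ collapse onto $\bdy U_2^j$ as $\gamma\to2^-$ — is the one genuinely technical point of the argument.
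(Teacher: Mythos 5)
Your overall architecture (marked points sampled from the area measure, an a.s.\ realization of the convergence behind Lemma~\ref{lem-infinite-marked-pts}, eventual constancy of the integer-valued $M_{\gamma}^j$, and a two-sided sandwich identifying each subsequential limit) is close in spirit to the paper's proof, which instead does the identification by bookkeeping with the indices $J_\gamma(m)$ (Lemma~\ref{lem-index-tight}) and the surjectivity coming from Lemma~\ref{lem-inverse-tight}. The genuine gap is in your coupling step. You ask for a single coupling in which, simultaneously, (a) the CLEs $\{\Gamma_{\gamma_k}\}$ and $\Gamma_2$ are related by the nested Brownian loop-soup coupling, and (b) the convergence of~\eqref{eqn-infinite-marked-loops} holds almost surely, in particular $\ep^{-1}\nu_{\gamma_k}(\ell_{\gamma_k}^{J_{\gamma_k}(m)})\to\nu_2(\ell_2^{J_2(m)})$ a.s. The Skorokhod theorem does not provide this: applied to the law-level convergence~\eqref{eqn-infinite-marked-loops} it produces \emph{some} coupling with no control on the joint law of $(\Gamma_{\gamma_k},\Gamma_2)$ inside it; conversely, if you impose the loop-soup coupling, the convergence of the marked-loop LQG lengths is known only in law --- the length is not a continuous functional of the field and the loop, and upgrading this convergence is precisely what the indirect argument of Lemma~\ref{lem-weighting} was needed for. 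Since you rest the crux (``no merging'') on the loop-soup coupling and explicitly assert it ``genuinely uses'' it, this is a real gap rather than a technicality.

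The good news is that the crux can be proved inside the Skorokhod coupling, so the loop-soup coupling is unnecessary. Suppose $J_2(m^{(r)})=J_2(m^{(r')})=i$ with $r\neq r'$. Since $M_{\gamma_k}^r=m^{(r)}$ and $M_{\gamma_k}^{r'}=m^{(r')}$ eventually, the two \emph{disjoint} loops $\ell_{\gamma_k}^{r}$ and $\ell_{\gamma_k}^{r'}$ both converge in the Hausdorff distance to $\ell_2^i$ (the marked-loop coordinate of~\eqref{eqn-infinite-marked-loops}), while $Z_{\gamma_k}(m^{(r)})$ and $Z_{\gamma_k}(m^{(r')})$ converge to interior points of $U_2^i$. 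Fix $\eps>0$ so small that the two limit points are joined by a path in $U_2^i$ avoiding the closed $\eps$-neighborhood $N_\eps$ of $\ell_2^i$. For $k$ large, both loops are contained in $N_\eps$ and both marked points lie in the same connected component $C$ of the complement of $N_\eps$; since $\partial U_{\gamma_k}^{r}\subset N_\eps$ and $C$ meets $U_{\gamma_k}^{r}$, we get $C\subset U_{\gamma_k}^{r}$, and likewise $C\subset U_{\gamma_k}^{r'}$, contradicting disjointness. With this replacement (or with the paper's argument via eventual constancy of $J_\gamma(m)$ together with a.s.\ distinctness of the limiting lengths), your lower/upper bound sandwich goes through. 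Two cosmetic points: eventual constancy of $M_{\gamma_k}^j$ requires including these variables in the Skorokhod tuple (tightness alone only gives a random subsequence, which is how the paper uses Lemma~\ref{lem-inverse-tight}), and Lemma~\ref{lem-index-tight} is not needed for the reduction to finite-dimensional marginals.
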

\begin{proof}
	By Lemma~\ref{lem-index-tight}, for each $m\in\BB N$ the random variables $\{J_\gamma(m)  \}_{\gamma\in (\sqrt{8/3},2)}$ are tight. By combining this with Lemmas~\ref{lem-disk-conv} and~\ref{lem-cle-conv}, we find that for each sequence of $\gamma$-values increasing to 2, there exists a subsequence $\mcl G$ and a coupling of $(\Phi_2,\Gamma_2,\{Z_2(m)\}_{m\in\BB N} )$ with a sequence of integer-valued random variables $\{\wt J(m) \}_{m\in\BB N}$ such that the following is true. As $\mcl G\ni\gamma \to 2$, we have the convergence of joint laws
	\eqb \label{eqn-index-conv}
	\{(  \ep^{-1} \nu_\gamma(\ell_\gamma^{J_\gamma(m )}) ,  J_\gamma(m) ) \}_{m\in\BB N} \to \{( \nu_2(\ell_2^{J_2(m)}) ,    \wt J(m) ) \}_{m\in\BB N} .
	\eqe
	By the Skorokhod theorem, we can couple so that the convergence~\eqref{eqn-index-conv} occurs a.s.\ along $\mcl G$. 
	
	Since $J_\gamma(m)$ is an integer-valued random variable, the convergence~\eqref{eqn-index-conv} implies that for each $m\in\BB N$, it is a.s.\ the case that $J_\gamma(m) = \wt J(m)$ for each $\gamma\in\mcl G$ sufficiently close to 2. 
	Hence~\eqref{eqn-index-conv} implies that a.s., for each $m\in\BB N$,  
	\eqb  \label{eqn-length-ssl-conv} 
	\lim_{\mcl G \ni \gamma} \ep^{-1} \nu_\gamma(\ell_\gamma^{\wt J (m )}) = \nu_2( \ell_2^{\wt J ( m)} ) .
	\eqe 
	
	By~\eqref{eqn-length-ssl-conv}, to conclude the proof it suffices to show that for each fixed $j \in \BB N$, a.s.\ there exists $m\in\BB N$ such that $\wt J(m) = j$. To this end, we use Lemma~\ref{lem-inverse-tight} to get that a.s.\ $M_\gamma^{j }$ does not diverge to $\infty$ as $\mcl G \ni \gamma \to 2^-$. 
	Hence there exists $m\in\BB N$ and a subsequence $\mcl G' \subset\mcl G$ (both random) such that $M_\gamma^{j } = m$ for each $\gamma \in \mcl G'$. Then for $\gamma \in \mcl G'$ sufficiently close to 2, we have $j  = J_\gamma(m)= \wt J(m)$. 
\end{proof}

\begin{proof}[Proof of Theorem~\ref{thm-critical-lengths}]
	This follows by combining Lemma~\ref{lem-levy-conv} and Proposition~\ref{prop-ordered-loop-conv}. 
\end{proof}

\bibliography{cibib, bib}
\bibliographystyle{hmralphaabbrv}

\end{document}